\newtheorem{theorem}{Theorem}
\newtheorem*{acknowledgements*}{Acknowledgements}
\newtheorem{corollary}[theorem]{Corollary}
\newtheorem*{example*}{Example}
\newtheorem{remark}[theorem]{Remark}
\newcommand{\set}[1]{{\left\{#1\right\}}}               % tra parentesi graffe
\newcommand{\pa}[1]{{\left(#1\right)}}                  % tra tonde
\newcommand{\abs}[1]{{\left|#1\right|}}                 % valore assoluto
\newcommand{\pair}[1]{\left\langle#1\right\rangle}      % pairing
\newcommand{\erre}{\mathbb{R}}
\newcommand{\cinf}{C^{\infty}(M)}
\newcommand{\hess}{\operatorname{Hess}}
\renewcommand{\hat}[1]{\widehat{#1}}
\renewcommand{\tilde}[1]{\widetilde{#1}}
\begin{document}

\title[Quasi-Einstein manifolds and Einstein warped products]{Some triviality results for quasi-Einstein manifolds and Einstein warped products}
\author{Paolo Mastrolia}
\address{Dipartimento di Matematica\\
Universit\`a degli Studi di Milano\\
via Saldini 50\\
I-20133 Milano, ITALY}
\email{paolo.mastrolia@gmail.com}

\author{Michele Rimoldi}
\address{Dipartimento di Matematica\\
Universit\`a degli Studi di Milano\\
via Saldini 50\\
I-20133 Milano, ITALY}
\email{michele.rimoldi@unimi.it}

%\date{\today}

\subjclass[2000]{53C21}
\keywords{Einstein warped products, quasi-Einstein manifolds, triviality, gradient estimates}

\begin{abstract}
In this paper we prove a number of triviality results for Einstein warped products and quasi-Einstein manifolds using different techniques and under assumptions of various nature. In particular we obtain and exploit gradient estimates for solutions of weighted Poisson-type equations and adaptations to the weighted setting of some Liouville-type theorems.
\end{abstract}

\maketitle

\section{Introduction}

A \emph{weighted manifold}, also known in the literature as a \emph{smooth metric measure space}, is a triple $(M^m,g_M, e^{-f}d\rm{vol})$, where $M^m$ is a complete $m$-dimensional Riemannian manifold with metric $g_M$, $f \in \cinf$ and $d\rm{vol}$ denotes the canonical Riemannian volume form on $M$. The Ricci tensor can be naturally extended to weighted manifolds introducing the modified \emph{$k$-Bakry-Emery Ricci tensor}
  \begin{equation}\label{kBakryEmeryRicc}
    Ric^k_f = Ric + \hess(f) - \frac 1k df\otimes df, \qquad \text{for}\,\, 0 <k\leq \infty.
  \end{equation}
  When $f$ is constant, $Ric^k_f \equiv Ric$, while, if $k=\infty$, $Ric^k_f = Ric_f$, the usual Bakry-Emery Ricci tensor. For a detailed introduction to weighted manifolds and the $k$-Bakry-Emery Ricci tensor, we refer to the papers of Wei and Wylie (\cite{WeiWylie2proc}, \cite{WeiWylie1}) and Li (\cite{li}).

    We call a weighted manifold \emph{$k$-quasi-Einstein} or simply \emph{quasi-Einstein} (and $g_M$ is a \emph{quasi-Einstein metric}) if
   \begin{equation}\label{QE}
Ric_f^k=\lambda g_M,
\end{equation}
    for some $\lambda \in \erre$ (see \cite{caseshuwei}).
  We note that: \begin{itemize}
                  \item if $f=$ constant, \eqref{QE} is the Einstein equation, and in this case we call the quasi-Einstein metric \emph{trivial};
                  \item if $k=\infty$, \eqref{QE} is exactly the gradient Ricci soliton equation. In the last years, since the appearance of the seminal works of R. Hamilton \cite{hamilton} and G. Perelman \cite{perelman}, the study of Ricci solitons (and of their generalizations) has become the matter of a rapidly increasing investigation, directed mainly toward problems of \emph{classification} and \emph{triviality}; among the enormous literature on the subject we only quote, as a few examples, the papers \cite{petwylie1}, \cite{petwylie2}, \cite{petwylie3}, \cite{prims}, \cite{prrims}, \cite{ELnM}.
                \end{itemize}
  In the following we deal only with the case $k \in \mathbb{N}$, which corresponds to the case of \emph{Einstein warped product metrics}. Indeed, in \cite{caseshuwei}, elaborating on \cite{kimkim}, it is proved a characterization of quasi-Einstein metrics as base metrics of Einstein warped product metrics. This characterization can be formulated in the following form (see \cite{rimoldi1}, Theorem 2). Recall that the \emph{$f$-Laplacian} of a weighted manifold $(M, g_M, e^{-f}d\rm{vol})$ is defined as the diffusion-type operator $\Delta_f=e^f\operatorname{div}(e^{-f}\nabla \,)$.

  \begin{theorem}\label{EwpandQEcharacterization}
    If $N^{m+k}=M^m \times_u F^k$ is a complete Einstein warped product with Einstein constant $\lambda$, warping function $u=e^{-f/k}$ and Einstein fibre $F^k$, then the weighted manifold $(M^m,g_M, e^{-f}d\rm{vol})$ satisfies the quasi-Einstein equation \eqref{QE}; furthermore, the Einstein constant $\mu$ of the fibre satisfies the equation
   \begin{equation}\label{EinsteinConstants}
    \Delta_f f=k\lambda-k\mu e^{\frac{2}{k}f}.
    \end{equation}
    Conversely, if the weighted manifold $(M^m,g_M, e^{-f}d\rm{vol})$ satisfies \eqref{QE}, then $f$ satisfies \eqref{EinsteinConstants} for some constant $\mu \in \erre$. Consider the warped product $N^{m+k}=M^m \times_u F^k$, with $u=e^{-f/k}$, and Einstein fibre $F$ with Einstein constant $\mu$. Then $N$ is Einstein with $Ric_N = \lambda g_N$.
  \end{theorem}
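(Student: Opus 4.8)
The plan is to compute the Ricci curvature of the warped product $N^{m+k}=M^m\times_u F^k$, with metric $g_N=g_M+u^2g_F$ and warping function $u=e^{-f/k}$, using the classical block formulas for the Ricci tensor of a warped product. Decomposing $TN$ pointwise into its horizontal part (isometric to $TM$) and its vertical part (tangent to the fibre $F$), these read, for $X,Y$ horizontal and $V,W$ vertical,
\begin{align*}
Ric_N(X,Y)&=Ric_M(X,Y)-\tfrac{k}{u}\hess(u)(X,Y),\\
Ric_N(X,V)&=0,\\
Ric_N(V,W)&=Ric_F(V,W)-\left(\tfrac{\Delta u}{u}+(k-1)\tfrac{\abs{\nabla u}^{2}}{u^{2}}\right)g_N(V,W),
\end{align*}
where $\hess$, $\Delta$, $\nabla$ are taken on $(M,g_M)$. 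Thus $Ric_N=\lambda g_N$ holds if and only if the horizontal and the vertical block equations hold simultaneously, the mixed block being automatic since $g_N(X,V)=0$.

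Next I would substitute $u=e^{-f/k}$. Differentiating gives $\nabla u=-\tfrac{u}{k}\nabla f$, whence $\hess(u)=\tfrac{u}{k^{2}}\,df\otimes df-\tfrac{u}{k}\hess(f)$, $\Delta u=\tfrac{u}{k^{2}}\abs{\nabla f}^{2}-\tfrac{u}{k}\Delta f$ and $\abs{\nabla u}^{2}=\tfrac{u^{2}}{k^{2}}\abs{\nabla f}^{2}$. Feeding the Hessian identity into the horizontal block, the factors of $u$ cancel and one finds $Ric_N=Ric_M+\hess(f)-\tfrac1k\,df\otimes df=Ric^{k}_{f}$ on the horizontal distribution, so the horizontal Einstein equation there is exactly the quasi-Einstein equation \eqref{QE}. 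Feeding the expressions for $\Delta u$ and $\abs{\nabla u}^{2}$ into the vertical block, the bracketed term simplifies to $-\tfrac1k\big(\Delta f-\abs{\nabla f}^{2}\big)=-\tfrac1k\Delta_f f$; since $g_N(V,W)=u^{2}g_F(V,W)$, the vertical Einstein equation becomes
\[
Ric_F=u^{2}\!\left(\lambda-\tfrac1k\Delta_f f\right)g_F .
\]

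Both implications now follow. If $N$ is Einstein with constant $\lambda$ and $F$ is Einstein with constant $\mu$, the horizontal equation is \eqref{QE}, while the vertical equation reads $\mu=e^{-2f/k}\!\left(\lambda-\tfrac1k\Delta_f f\right)$, equivalently $\Delta_f f=k\lambda-k\mu e^{\frac2k f}$, which is \eqref{EinsteinConstants}. For the converse, assume \eqref{QE}. One first needs that $f$ satisfies \eqref{EinsteinConstants} for \emph{some} constant $\mu$ — equivalently, writing $u=e^{-f/k}$, that $u\,\Delta u+(k-1)\abs{\nabla u}^{2}+\lambda u^{2}$ is constant on $M$ — which is the classical fact underlying the Kim--Kim and Case--Shu--Wei characterization (\cite{kimkim}, \cite{caseshuwei}; see also \cite{rimoldi1}). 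Taking this constant to be $\mu$ and forming $N=M\times_u F$ with Einstein fibre of constant $\mu$, running the computation above in reverse shows that each of the three Ricci blocks of $g_N$ equals $\lambda g_N$, so $Ric_N=\lambda g_N$.

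The only step that is not a mechanical substitution into the warped-product formulas is the one flagged above: that \eqref{QE} forces $u\,\Delta u+(k-1)\abs{\nabla u}^{2}+\lambda u^{2}$ to be constant, and I expect this to be the main point. It follows from the contracted second Bianchi identity: rewriting \eqref{QE} as $Ric_M=\lambda g_M+\tfrac{k}{u}\hess(u)$ and tracing gives $S=m\lambda+\tfrac{k}{u}\Delta u$ for the scalar curvature $S$; then, taking the divergence of \eqref{QE} and using $\diver Ric_M=\tfrac12\,dS$ together with $\diver\hess(u)=d(\Delta u)+Ric_M(\nabla u,\cdot)$, one obtains after simplification $\nabla\!\big(u\,\Delta u+(k-1)\abs{\nabla u}^{2}+\lambda u^{2}\big)=0$. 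I would either reproduce this short computation or simply cite \cite{kimkim}, \cite{caseshuwei}, \cite{rimoldi1}; everything else reduces to the block formulas and the substitution $u=e^{-f/k}$, needing only care with signs and with the factor $1/k$.
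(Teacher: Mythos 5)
Your proof is correct: the block formulas for the Ricci tensor of a warped product, the substitution $u=e^{-f/k}$ (whose algebra --- the cancellation in the horizontal block giving $Ric^k_f$ and the identification of the vertical bracket with $-\tfrac1k\Delta_f f$ --- checks out), and the contracted-Bianchi argument for the constancy of $u\,\Delta u+(k-1)|\nabla u|^{2}+\lambda u^{2}$ are all sound. The paper itself offers no proof of this theorem, quoting it from \cite{caseshuwei}, \cite{kimkim} and \cite{rimoldi1}; your argument is exactly the standard one given in those references.
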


  The previous characterization  permits to study Einstein warped products by focusing only on equation \eqref{EinsteinConstants}.

Examples of quasi-Einstein manifolds with $\lambda < 0$ and $\mu$ of arbitrary sign, or with $\lambda=0$ and $\mu \geq 0$ are constructed in \cite{Besse}. Moreover, in the latter case, all non-trivial examples have $\mu >0$, while the trivial quasi-Einstein metrics with $\lambda=0$ necessarily satisfy $\mu=0$. Other non-trivial examples with $\lambda>0$, $k>1$ and $\mu>0$ are constructed in \cite{LuPagePope}. Since, if $k<\infty$ and $\lambda>0$, $M$ is necessarily compact (see \cite{qian}), the maximum principle applied to \eqref{EinsteinConstants} yields that $\mu >0$ in this situation.

The aim of this work is to prove a number of triviality results obtained with different techniques and under assumptions of various nature. The paper is organized as follows.

In Section \ref{2} the key tools are gradient estimates for solutions of the class of equations to which \eqref{EinsteinConstants} belongs to.  First we prove a new gradient estimate which extend to the case $\lambda< 0$ the one given by J. Case in \cite{case}. This will allow us to  obtain a triviality result when the function $f$ is bounded from below by a constant depending on $m$, $k$ and on the Einstein constants $\lambda$ and $\mu$, respectively of the warped product and of the fibre. Further triviality results adapting Liouville-type theorems from \cite{MasRig} are also given.

In \cite{rimoldi1} one of the authors prove a triviality result under weighted integrability conditions on $f$. In Section \ref{3}, using a Motomiya-type theorem, we are able to obtain the same conclusion under a more natural integrability assumption.

In Section \ref{4} we concentrate on 1-quasi-Einstein manifolds. Indeed, by means of an adaptation to the weighted Laplacian of a Liouville-type result obtained in \cite{PRS_JFA1}, we get the triviality of the quasi-Einstein structure or the constancy of the scalar curvature.

\section{Gradient estimates and triviality results}\label{2}

\subsection{A generalization of Case's gradient estimate}
In \cite{case} J. Case deals with the triviality of quasi-Einstein metrics, and hence, of Einstein warped products, by considering only equation \eqref{EinsteinConstants}. However in that work only the case $\lambda\geq0$ is studied.
\begin{theorem}\label{CaseTriv}(Case)
Let $N^{m+k}=M^{m}\times_{u}F^{k}$ be a complete warped product with warping function $u(x)=e^{-\frac{f\left(  x\right)  }{k}}$, scalar curvature $^NS\geq 0$ and complete Einstein fibre $F$.
Then $N$ is simply a Riemannian product provided the base manifold $M$ is complete and the scalar curvature of $F$ satisfies $^{F}S\leq 0$.
\end{theorem}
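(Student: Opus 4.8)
The plan is to transfer the problem to the base $M$ and then read the conclusion off the favourable signs in the weighted Bochner formula. Since $N$ is Einstein, the hypothesis $^NS\ge 0$ is exactly the condition $\lambda\ge 0$ on its Einstein constant, and by Theorem \ref{EwpandQEcharacterization} the base $(M^m,g_M,e^{-f}d\mathrm{vol})$ is quasi-Einstein with $Ric_f^k=\lambda g_M$ and $f$ satisfies \eqref{EinsteinConstants} for some Einstein constant $\mu$ of the fibre; the assumption $^FS\le 0$ forces $\mu\le 0$. Hence it suffices to prove that, whenever $\lambda\ge 0$ and $\mu\le 0$, the function $f$ — equivalently the warping function $u=e^{-f/k}$ — is constant, since then $N=M\times F$ is a Riemannian product (and one then necessarily has $\lambda=\mu=0$, with both factors Ricci-flat).

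First I would rule out $\lambda>0$: by Qian's theorem $M$ would then be compact, and the maximum principle applied to \eqref{EinsteinConstants} at a maximum point of $f$ yields $\lambda\le\mu e^{\frac{2}{k}f}\le 0$, contradicting $\lambda>0$ (this is the observation, recalled in the Introduction, that $\lambda>0$ forces $\mu>0$). So $\lambda=0$, and the weighted Bochner identity
\[
\tfrac12\Delta_f|\nabla f|^2 = |\hess f|^2 + \langle\nabla f,\nabla(\Delta_f f)\rangle + Ric_f(\nabla f,\nabla f)
\]
can be exploited: now $Ric_f=\tfrac1k\,df\otimes df$, so $Ric_f(\nabla f,\nabla f)=\tfrac1k|\nabla f|^4$, while differentiating \eqref{EinsteinConstants} gives $\langle\nabla f,\nabla(\Delta_f f)\rangle=-2\mu e^{\frac{2}{k}f}|\nabla f|^2\ge 0$; since also $|\hess f|^2\ge 0$,
\[
\Delta_f|\nabla f|^2 \ \ge\ \frac{2}{k}\,|\nabla f|^4 .
\]
Thus $G:=|\nabla f|^2$ is a non-negative $f$-subharmonic function satisfying $\Delta_f G\ge \tfrac2k G^2$ on the complete weighted manifold $M$.

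If $M$ is compact the conclusion is immediate: integrating the last inequality against $e^{-f}d\mathrm{vol}$ gives $0=\int_M\Delta_f G\,e^{-f}d\mathrm{vol}\ge\tfrac2k\int_M G^2\,e^{-f}d\mathrm{vol}$, so $G\equiv 0$. In the complete non-compact case one first needs an a priori gradient estimate $|\nabla f|^2\le C$: this is precisely Case's estimate, proved by a localized Bochner computation with a cut-off function and relying on the weighted Laplacian and volume comparison that $Ric_f^k=0$ (with $k<\infty$) provides. Granting the bound, the weighted Omori--Yau maximum principle — available because $Ric_f^k\ge 0$ with $k$ finite — produces a sequence $(x_j)$ with $G(x_j)\to\sup_M G$ and $\Delta_f G(x_j)\le 1/j$; substituting into $\Delta_f G\ge\tfrac2k G^2$ forces $\sup_M G=0$, i.e.\ $\nabla f\equiv 0$. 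Either way $f$, hence $u$, is constant and $N$ is a Riemannian product. The genuine difficulty lies in the non-compact case and is concentrated entirely in the a priori gradient estimate for $|\nabla f|$ when $f$ need not be bounded; this is the technical heart of Case's work, and it is exactly this estimate that the rest of the subsection sharpens and, as announced, extends to the regime $\lambda<0$.
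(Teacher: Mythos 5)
Your argument is correct and follows essentially the paper's own route: Theorem \ref{CaseTriv} is obtained by passing to the quasi-Einstein equation \eqref{EinsteinConstants} on the base via Theorem \ref{EwpandQEcharacterization}, checking that $\phi(t)=k\lambda-k\mu e^{\frac{2}{k}t}$ satisfies $\phi'+\frac{2}{m}\phi\geq 0$ when $\lambda\geq 0$ and $\mu\leq 0$, and invoking Case's gradient estimate (Theorem \ref{CaseEstimate}), whose localized cut-off argument is the genuine technical content and which you, like the paper, take as given. The only divergence is organizational and harmless: once \eqref{GradEstCase} is granted, letting $T\to+\infty$ already yields $\nabla f\equiv 0$ directly, so your separate derivation of $\Delta_f|\nabla f|^2\geq\frac{2}{k}|\nabla f|^4$ followed by the Omori--Yau (or, in the compact case, integration) finish is a valid but redundant detour --- indeed that Bochner inequality is precisely the opening step in the proof of the estimate itself, cf.\ the proof of Theorem \ref{estCaseStyle}.
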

\begin{remark}\rm{
If $\lambda>0$, as observed above, the assertion of Theorem \ref{CaseTriv} can be easily proved from \eqref{EinsteinConstants} using the maximum principle.\\
Note also that in \cite{rimoldi1} a generalization of Theorem \ref{CaseTriv} is obtained, proving the triviality when the warped product and the fibers, respectively, have non-positive and non-negative scalar curvature, up to assume an integrability condition on the warping function $u$.}

\end{remark}

The proof of Theorem \ref{CaseTriv} is a consequence of the following gradient estimate for solutions of weighted Poisson equations (see also \cite{SchoenYau}).We denote with $B(q,T)$ the geodesic ball centered in $q$ of radius $T$.
\begin{theorem}\label{CaseEstimate}(Case)
Let $(M^m,g_M, e^{-f}d\rm{vol})$ be such that $Ric_f^k\geq 0$, $k<\infty$, and
\begin{equation}\label{Poisson}
\Delta_ff=\phi(f),
\end{equation}
where $\phi:\mathbb{R}\to\mathbb{R}$ is a function such that
\begin{equation}
\phi^{\prime}(t)+\frac{2}{m}\phi(t)\geq 0
\end{equation}
for all $t\in\mathbb{R}$. Then for all $q\in M$, $T>0$ such that $B(q,T)$ is geodesically connected in $M$ and the closure $\overline{B(q,T)}$ is compact,
\begin{equation}\label{GradEstCase}
|\nabla f|^2(q)\leq\frac{mk}{m+k}\frac{2(m+k+6)}{T^2}.
\end{equation}
\end{theorem}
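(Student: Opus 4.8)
The plan is to run a Bochner--Weitzenb\"ock argument adapted to the $f$-Laplacian, applied not to $|\nabla f|^2$ directly but to a carefully chosen power of it weighted by a cutoff. First I would recall the weighted Bochner formula: for any $u \in \cinf$,
\[
\tfrac12 \Delta_f |\nabla u|^2 = |\hess u|^2 + \pair{\nabla u, \nabla \Delta_f u} + Ric_f(\nabla u, \nabla u).
\]
Specializing to $u=f$ and using \eqref{Poisson}, the middle term becomes $\phi'(f)|\nabla f|^2$, so the only genuinely new ingredient compared to the unweighted Schoen--Yau estimate is how one bounds $|\hess f|^2$ from below by a multiple of $(\Delta_f f)^2$ together with the curvature term. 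Here the finiteness of $k$ is essential: one uses the refined Kato-type inequality that comes from writing $\Delta_f f = \Delta f - \pair{\nabla f,\nabla f} = \operatorname{tr}\hess f - |\nabla f|^2$ and applying Cauchy--Schwarz in the form
\[
|\hess f|^2 \geq \tfrac1m (\Delta f)^2 = \tfrac1m\bigl(\Delta_f f + |\nabla f|^2\bigr)^2,
\]
and then absorbing the cross term by exploiting $\tfrac{1}{k}|\nabla f|^4$, which is exactly the correction sitting inside $Ric_f^k(\nabla f,\nabla f) \geq 0$. The upshot should be a differential inequality of the schematic shape
\[
\tfrac12 \Delta_f |\nabla f|^2 \geq \tfrac{m+k}{mk}|\nabla f|^4 + \Bigl(\phi'(f) + \tfrac{2}{m}\phi(f)\Bigr)|\nabla f|^2,
\]
where the last bracket is $\geq 0$ by hypothesis and can simply be discarded. (The precise constant $\tfrac{m+k}{mk}$ is what eventually produces the factor $\tfrac{mk}{m+k}$ in \eqref{GradEstCase}, so I would be careful to track it.)

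Next I would set $w = |\nabla f|^2$, so that $w$ satisfies $\tfrac12\Delta_f w \geq \tfrac{m+k}{mk} w^2$ on $M$, and localize. Choose a smooth cutoff $\psi$ supported in $B(q,T)$ with $\psi \equiv 1$ on $B(q,T/2)$ and the standard bounds $|\nabla \psi|^2 \leq C_1/T^2$, $|\Delta_f \psi|$ or rather $\psi|\Delta \psi|$ controlled — but since we do not have a bound on $f$ or $\Delta_f \psi$, the cleaner route (and the one that produces the clean numerics $m+k+6$) is to consider the function $G = \psi^2 w$ and examine it at an interior maximum point $x_0$. At $x_0$ one has $\nabla G = 0$ and $\Delta_f G \leq 0$; expanding $\Delta_f(\psi^2 w) = w\Delta_f(\psi^2) + 2\pair{\nabla \psi^2, \nabla w} + \psi^2 \Delta_f w$, using $\nabla w = -w\nabla(\psi^2)/\psi^2$ from the first-order condition, and plugging in the inequality for $\Delta_f w$, one obtains at $x_0$
\[
0 \geq \tfrac{2(m+k)}{mk}\psi^2 w^2 - (\text{terms involving } |\nabla\psi|^2 \text{ and } \psi\Delta_f\psi)\, w.
\]
The term with $\Delta_f \psi$ is the annoying one because it a priori involves $\pair{\nabla f,\nabla\psi}$; I would handle it by absorbing it via Cauchy--Schwarz into an $\eps\psi^2 w^2$ term plus a $|\nabla\psi|^2$ term, or — following Case — by choosing $\psi$ to be a function of the distance composed with a one-variable profile so that $\Delta\psi$ and $|\nabla\psi|^2$ are controlled by $1/T^2$ up to the Laplacian comparison, noting that $Ric_f^k \geq 0$ with $k$ finite gives the needed Bakry--\'Emery volume/Laplacian comparison (Wei--Wylie). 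After dividing by $\psi^2 w$ (assuming $w(x_0) \neq 0$, else we are done) and multiplying through by $\psi^2$, one gets $\psi^4 w^2 \leq C(m,k)/T^2 \cdot \psi^2 w \cdot(\text{lower order})$, hence a bound $G(x_0) = \psi^2(x_0)w(x_0) \leq \tfrac{mk}{m+k}\cdot\tfrac{m+k+6}{T^2}\cdot 2$ after optimizing the constants; evaluating at $q$ where $\psi=1$ gives $|\nabla f|^2(q) \leq \tfrac{mk}{m+k}\cdot\tfrac{2(m+k+6)}{T^2}$.

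The main obstacle, and the place where the argument is most delicate, is the treatment of the first-order cross term $\pair{\nabla f, \nabla\psi}$ that appears through $\Delta_f\psi = \Delta\psi - \pair{\nabla f,\nabla\psi}$: unlike in the unweighted Schoen--Yau estimate, we have no pointwise control on $|\nabla f|$ a priori (that is precisely what we are trying to bound), so this term must be absorbed self-referentially. The standard fix is to bound $|\pair{\nabla f,\nabla\psi}| \leq |\nabla\psi|\, w^{1/2}$ and then use Young's inequality $|\nabla\psi| w^{1/2} \cdot w \leq \eps\psi^2 w^2 \cdot(\text{something}) + C_\eps |\nabla\psi|^2 w /\psi^2 \cdot(\ldots)$ with $\eps$ small enough to be swallowed by the good quadratic term $\tfrac{2(m+k)}{mk}\psi^2 w^2$; tracking how the choice of $\eps$, together with the cutoff constants, feeds into the final numerical constant is the bookkeeping that yields exactly $2(m+k+6)$. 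One should also double-check that geodesic connectedness and compactness of $\overline{B(q,T)}$ are exactly what is needed to guarantee the maximum of $G$ is attained at an interior point (or on $B(q,T/2)$) and that the distance function is smooth enough where needed, or else pass to the barrier/support-function version of the maximum principle to avoid the cut locus.
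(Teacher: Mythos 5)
Your first half reproduces the paper's argument exactly: the $k$-Bochner formula applied to $f$, Newton's inequality $|\hess f|^2\geq\frac1m(\Delta f)^2$ together with $\Delta f=\Delta_f f+|\nabla f|^2$, and the hypothesis $\phi'+\frac2m\phi\geq0$ combine to give $\Delta_f|\nabla f|^2\geq 2\left(\frac1m+\frac1k\right)|\nabla f|^4$, the extra $\frac1k|\nabla f|^4$ in the weighted Bochner identity being what produces the constant $\frac{m+k}{mk}$. (The paper in fact proves the more general Theorem \ref{estCaseStyle}, with $Ric_f^k\geq\lambda=-(m+k-1)Z^2$, and recovers the present statement as $Z\to0^+$.)

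Where you hedge between two localizations, the paper commits to the second of your options, and that choice is what dissolves the ``main obstacle'' you flag. It takes $F=(T^2-\rho^2)^2|\nabla f|^2$ with $\rho=\dist(q,\cdot)$ (Calabi trick for smoothness past the cut locus), uses $\nabla F=0$ and $\Delta_f F\leq0$ at an interior maximum, and bounds the resulting term $-2\Delta_f\rho^2/(T^2-\rho^2)$ by the $f$-Laplacian comparison theorem, which under $Ric_f^k\geq0$ gives $\Delta_f\rho^2\leq 2(m+k)$ (Qian, Wei--Wylie). The drift cross term $\pair{\nabla f,\nabla\rho^2}$ is therefore controlled wholesale by the comparison theorem, with no a priori bound on $|\nabla f|$ and no Young's-inequality absorption; your primary route (generic cutoff $\psi^2w$ plus absorbing $\pair{\nabla f,\nabla\psi}\,w$ into $\eps\psi^2w^2$) would still close the argument but only up to an unspecified constant, whereas the explicit $2(m+k+6)$ falls out of the elementary bound $-4(m+k)(T^2-\rho^2)-24\rho^2\geq-4(m+k+6)T^2$ available with the quartic distance cutoff. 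So the proposal is correct in substance and strategy; to get the stated constant you should drop the absorption scheme and run the comparison-theorem version you mention in passing.
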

We are able to obtain a similar estimate even in case $\lambda< 0$.
\begin{theorem}\label{estCaseStyle}
    Let $(M^m, g_M, e^{-f} d\rm{vol})$ be a weighted manifold (not necessarily complete). Suppose that, for some $k < +\infty,\, Z > 0$,
    \begin{equation}\label{condriccimfZ}
    Ric_f^k \geq\lambda= -(m+k-1)Z^2
    \end{equation}
    and that
    \begin{equation}\label{condfLaplf}
      \Delta_f f = \psi(f),
    \end{equation}
    where $\psi : \mathbb{R} \to \mathbb{R}$ satisfy
    \begin{equation}\label{psiprimepsi}
      \psi'(t) + \frac {2}{m} \psi(t) +\lambda \geq 0
    \end{equation}
    for all $t \in \mathbb{R}$. Then for all $q \in M$ and $T>0$ such that $B(q, T)$ is geodesically connected in $M$ and the closure $\overline{B_(q, T)}$ is compact,
     \begin{equation}\label{gradEstCasestyle}
      |\nabla f|^2(q) \leq  \frac{mk}{m+k} \left[\frac{2(m+k+6)}{T^2} - \frac{4\sqrt{3}}{9}\frac{\lambda}{Z}\frac{1}{T}\right].
    \end{equation}
\end{theorem}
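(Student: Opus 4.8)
The plan is to retrace, mutatis mutandis, the proof of Case's Theorem~\ref{CaseEstimate}: the negative bound \eqref{condriccimfZ} will not affect the Bochner part of the argument, and will enter only through the weighted Laplacian comparison for the distance function --- which is exactly what produces the extra $1/T$--term in \eqref{gradEstCasestyle}. Put $w=|\nabla f|^2$. Using $\Delta_f u=\Delta u-\langle\nabla f,\nabla u\rangle$, the identity $\Delta f=\Delta_f f+w=\psi(f)+w$ coming from \eqref{condfLaplf}, and $Ric_f^\infty=Ric_f^k+\frac1k\,df\otimes df$, the weighted Bochner formula gives $\frac12\Delta_f w=|\hess f|^2+\psi'(f)w+Ric_f^k(\nabla f,\nabla f)+\frac1k w^2$; estimating $|\hess f|^2\ge(\Delta f)^2/m=(\psi(f)+w)^2/m$ by Cauchy--Schwarz and $Ric_f^k(\nabla f,\nabla f)\ge\lambda w$ by \eqref{condriccimfZ}, one obtains
\begin{equation*}
\tfrac12\Delta_f w\ \ge\ \frac{(\psi(f)+w)^2}{m}+\psi'(f)w+\lambda w+\frac{w^2}{k}.
\end{equation*}

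Now the structural hypothesis \eqref{psiprimepsi} is brought in: since $w\ge0$, multiplying \eqref{psiprimepsi} by $w$ gives $\psi'(f)w+\lambda w\ge-\frac2m\psi(f)w$, and the cross term cancels exactly, leaving
\begin{equation*}
\tfrac12\Delta_f w\ \ge\ \frac{(\psi(f)+w)^2-2\psi(f)w}{m}+\frac{w^2}{k}=\frac{\psi(f)^2+w^2}{m}+\frac{w^2}{k}\ \ge\ \frac{m+k}{mk}\,w^2.
\end{equation*}
Thus $\Delta_f w\ge\frac{2(m+k)}{mk}w^2$ on $B(q,T)$ --- formally the same differential inequality underlying Theorem~\ref{CaseEstimate}; only its localization remains.

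For this, let $r=\dist(q,\cdot)$ and take the cutoff $\varphi=(1-r^2/T^2)^2$, extended by $0$ off $B(q,T)$. On the geodesically connected ball $B(q,T)$ with $\overline{B(q,T)}$ compact, $r$ is Lipschitz and smooth away from $q$ and the cut locus, and the Wei--Wylie weighted Laplacian comparison applies: reading \eqref{condriccimfZ} as $Ric_f^k\ge(m+k-1)(-Z^2)$, one gets $\Delta_f r\le(m+k-1)Z\coth(Zr)\le(m+k-1)(Z+1/r)$, the last step by $\coth x\le1+1/x$. If $w\equiv0$ on $B(q,T)$ there is nothing to prove; otherwise $G:=\varphi w$ has a positive interior maximum at some $x_0$ (the non-smoothness of $r$ handled by Calabi's trick). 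There $\nabla G=0$ forces $\varphi\nabla w=-w\nabla\varphi$, and $\Delta_f G\le0$ then yields $\varphi\Delta_f w\le\frac{2w}{\varphi}|\nabla\varphi|^2-w\Delta_f\varphi$; combining with the inequality for $w$, dividing by $w(x_0)>0$ and using $\varphi(q)=1$,
\begin{equation*}
\frac{2(m+k)}{mk}\,|\nabla f|^2(q)\ \le\ \Big(\frac{2|\nabla\varphi|^2}{\varphi}-\Delta_f\varphi\Big)(x_0).
\end{equation*}

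It remains to insert the cutoff data. With $s=r/T$ and $\eta(s)=(1-s^2)^2$ one has $|\nabla\varphi|^2/\varphi=16s^2/T^2$ and $\Delta_f\varphi=\frac{\eta''(s)}{T^2}+\frac{\eta'(s)}{T}\Delta_f r$ with $\eta'\le0$; using the comparison for $\Delta_f r$ bounds the right-hand side above at $x_0$ by
\begin{equation*}
\frac{(20-4(m+k-1))s^2+4+4(m+k-1)}{T^2}+\frac{(m+k-1)Z}{T}\cdot 4s(1-s^2),
\end{equation*}
and maximizing over $s\in[0,1]$ --- the first term by at most $4(m+k+6)$, the second using $\max_{0\le s\le1}4s(1-s^2)=\frac{8\sqrt3}{9}$ --- and recalling $-\lambda/Z=(m+k-1)Z$, one lands exactly on \eqref{gradEstCasestyle}. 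I expect the genuinely delicate point to be precisely this last step: checking that the bare cutoff $(1-r^2/T^2)^2$ together with $\coth x\le1+1/x$ is enough to keep the $1/T^2$--coefficient at Case's value $m+k+6$ while yielding the sharp constant $\frac{4\sqrt3}{9}$ in the new term (this is where the elementary maximum $\max_{[0,1]}s(1-s^2)=\frac{2\sqrt3}{9}$ enters). The Bochner and hypothesis steps are essentially forced once one knows the trick of using \eqref{psiprimepsi} to cancel the cross term; a secondary, more routine point is justifying the distance comparison and the maximum principle on the possibly incomplete $M$, which is exactly what the hypotheses on $B(q,T)$ are designed for.
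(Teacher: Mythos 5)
Your proof is correct and follows essentially the same route as the paper's: the weighted Bochner formula plus Newton's inequality and the cancellation forced by \eqref{psiprimepsi} to arrive at $\Delta_f|\nabla f|^2\geq \frac{2(m+k)}{mk}|\nabla f|^4$, then the localization via the cutoff $(T^2-\rho^2)^2|\nabla f|^2$ (yours is the same up to the harmless factor $T^{-4}$), the weighted Laplacian comparison under $Ric_f^k\geq -(m+k-1)Z^2$, and the elementary maximization of $s(1-s^2)$ producing the constant $\tfrac{4\sqrt3}{9}$. All the constants check out against \eqref{gradEstCasestyle}.
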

\begin{remark}
Note that in case $Ric_f^k\geq 0$ we recover Case's result by letting $Z\to 0^{+}$.
\end{remark}
    \begin{proof}
      From the Bochner formula (see, for instance, \cite{MasRig})
      \[
      \frac{1}{2} \Delta_f |\nabla u|^2 = |Hess(u)|^2 + g_M( \nabla \Delta_f u, \nabla u) + Ric_f^k(\nabla u, \nabla u) + \frac{1}{k} g_M( \nabla f, \nabla u)^2.
      \]
      Applying the previous formula to $f$ and using \eqref{condriccimfZ}, \eqref{condfLaplf}, \eqref{psiprimepsi}, Newton inequalities  and $\Delta f = \Delta_ff + |\nabla f|^2$ we obtain
     \begin{align*}
         \frac {1}{2} \Delta_f |\nabla f|^2 &= |Hess(f)|^2 + g_M(\nabla \Delta_f f, \nabla f) + Ric_f^k(\nabla f, \nabla f) + \frac {1}{k} |\nabla f|^4 \\
          &\geq |Hess(f)|^2 + \psi'(f)|\nabla f|^2 + \lambda |\nabla f|^2 + \frac {1}{k} |\nabla f|^4 \\
           &\geq \frac {1}{m}(\Delta f)^2 + \psi'(f)|\nabla f|^2 + \lambda |\nabla f|^2 + \frac {1}{k} |\nabla f|^4 \\
           &= \frac {1}{m} \psi^2(f) + \pa{\psi'(f) + \frac {2}{m} \psi(f) + \lambda}|\nabla f|^2 + \left(\frac {1}{m} + \frac {1}{k}\right)|\nabla f|^4 \\ &\geq \left(\frac {1}{m} + \frac {1}{k}\right)|\nabla f|^4,
      \end{align*}
      and then we deduce
      \begin{equation}\label{estimDeltaffabs}
        \Delta_f |\nabla f|^2 \geq 2\left(\frac{1}{m}+\frac{1}{k}\right) |\nabla f|^4.
      \end{equation}
      Let now $\rho(x) := dist(q, x)$ (using the Calabi trick, \cite{calabi}, we can suppose that $\rho$ is smooth) and consider on $B(q, T)$ the function
      \begin{equation}
        F(x) = \left[T^2 - \rho^2(x)\right]^2 |\nabla f|^2.
      \end{equation}
      If $|\nabla f| \equiv 0$ we have nothing to prove; if $|\nabla f| \not \equiv 0$, since $F \geq 0$ and  $\left. F \right|_{\partial B(q, T)} \equiv 0$, there exists a point $x_0 \in B(q,T)$ such that $F(x_0) = \underset{\overline{B(q, T)}}\max F(x) > 0$. At $x_0$ we then have
      \begin{equation}\label{nablaFoverF}
      \frac{\nabla F}{F}(x_0) = 0,
      \end{equation}
      \begin{equation}\label{deltafFoverF}
       \frac{\Delta_f F}{F}(x_0) \leq 0.
      \end{equation}

      A long but straightforward calculation shows that \eqref{nablaFoverF} is equivalent to
      \begin{equation}\label{nablaFoverFtranslated}
      \frac{\nabla |\nabla f|^2}{|\nabla f|^2}= \frac{2\nabla \rho^2}{T^2-\rho^2} \qquad \text{ at } x_0,
      \end{equation}
      while, using \eqref{nablaFoverFtranslated} and the Gauss lemma, condition \eqref{deltafFoverF} is equivalent to
       \begin{equation}\label{deltafFoverFtranslated}
       0 \geq -2\frac{\Delta_f \rho^2}{T^2-\rho^2} + \frac{\Delta_f|\nabla f|^2}{|\nabla f|^2} -24 \frac{\rho^2}{(T^2-\rho^2)^2} \quad \text{ at } x_0.
      \end{equation}
       From the $f$-Laplacian comparison theorem (see \cite{qian}, \cite{mrs}) we have
      \begin{equation}\label{fLaplcomparisonZ}
        \Delta_f \rho^2 \leq 2\left[(m+k) + (m+k-1)Z\rho\right];
      \end{equation}
      combining \eqref{estimDeltaffabs}, \eqref{deltafFoverFtranslated} and \eqref{fLaplcomparisonZ} we find, at $x_0$,
      \begin{equation*}
        0 \geq -4 \frac{\left[(m+k) + (m+k-1)Z\rho\right]}{T^2-\rho^2} + 2  \left(\frac{1}{m}+\frac{1}{k}\right)|\nabla f|^2 -24 \frac{\rho^2}{(T^2-\rho^2)^2},
      \end{equation*}
      which implies, multiplying through by $(T^2-\rho^2)^2$, that at $x_0$ we have
      \begin{equation}
        0 \geq -4 \left[(m+k) + (m+k-1)Z\rho\right](T^2-\rho^2) + 2  \left(\frac{1}{m}+\frac{1}{k}\right)F -24 \rho^2.
      \end{equation}
      The previous relation can be rewritten as
      \begin{equation}\label{estWithH_3}
        0 \geq -4(m+k)(T^2-\rho^2) + 2  \left(\frac{1}{m}+\frac{1}{k}\right)F -24 \rho^2 + H_3(\rho),
      \end{equation}
      where $H_3 : [0, T] \to \mathbb{R}$ is defined by $H_3(\rho)=4(m+k-1)Z(\rho^3-T^2\rho)$. Since $H_3$ assumes its minimum value $-\frac{8\sqrt{3}}{9}(m+k-1)ZT^3 =$  $\frac{8\sqrt{3}\lambda}{9Z}T^3$ for $\bar{t} = \frac{T}{\sqrt{3}}$, equation \eqref{estWithH_3} implies
      \begin{equation*}
        0 \geq -4(m+k)T^2 + 2  \left(\frac{1}{m}+\frac{1}{k}\right)\left[T^2 - \rho^2(x)\right]^2 |\nabla f|^2 + \frac{8\sqrt{3}\lambda}{9Z}T^3 -24\rho^2,
      \end{equation*}
      and so
      \[
       2  \left(\frac{1}{m}+\frac{1}{k}\right)\left[T^2 - \rho^2(x)\right]^2 |\nabla f|^2 \leq 4(m+k+6)T^2 - \frac{8\sqrt{3}\lambda}{9Z}T^3,
      \]
      which easily implies the thesis taking the $\sup$ on $B(q, T)$.
\end{proof}

As a corollary we immediately get to the following Liouville-type theorem for Einstein warped products.
\begin{theorem}\label{ExtCase}
    Let $N = M^m \times_u F^k$ a complete Einstein warped product with warping function $u = e^{-f/k}$. scalar curvature $^NS=(m+k)\lambda<0$ and complete Einstein fibre $F^k$ with scalar curvature $^FS=k\mu < 0$. Suppose that
    \begin{equation}\label{condflambdamu}
      f \geq \frac {k}{2} \log\left(\frac{\lambda}{2\mu}\frac{m+2k}{m+k}\right) \qquad \text{for all } x \in M.
    \end{equation}
    Then $N$ is simply a Riemannian product (up to a rescaling of the metric on $F$).
\end{theorem}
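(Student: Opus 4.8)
The plan is to feed the quasi-Einstein data attached to $N$ into the gradient estimate of Theorem \ref{estCaseStyle}, conclude that $f$ must be constant, and then read off the product structure from the constancy of the warping function $u=e^{-f/k}$.

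First I would invoke Theorem \ref{EwpandQEcharacterization}: since $N$ is a complete Einstein warped product with $Ric_N=\lambda g_N$ (because $^NS=(m+k)\lambda$), warping function $u=e^{-f/k}$, and Einstein fibre with $Ric_F=\mu g_F$ (because $^FS=k\mu$), the base satisfies $Ric_f^k=\lambda g_M$ with $\lambda<0$, and $f$ solves the Poisson-type equation $\Delta_f f=\psi(f)$ with $\psi(t)=k\lambda-k\mu e^{\frac2k t}$. As $\lambda<0$ we set $Z:=\sqrt{-\lambda/(m+k-1)}>0$, so $\lambda=-(m+k-1)Z^2$ and hypothesis \eqref{condriccimfZ} holds (with equality). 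Moreover, $N$ complete forces $M$ complete, so for every $q\in M$ and every $T>0$ the ball $B(q,T)$ has compact closure and is geodesically connected; hence the geometric hypotheses of Theorem \ref{estCaseStyle} are available at all radii.

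The core computation is to recognize that assumption \eqref{condflambdamu} is precisely the structural hypothesis \eqref{psiprimepsi} for this $\psi$. Indeed $\psi'(t)=-2\mu e^{\frac2k t}$, whence
\[
\psi'(t)+\frac2m\psi(t)+\lambda \;=\; -2\mu\,\frac{m+k}{m}\,e^{\frac2k t}\;+\;\lambda\,\frac{m+2k}{m}.
\]
Because $\mu<0$ the right-hand side is strictly increasing in $t$; since $\lambda/\mu>0$ it has a unique zero at $t_0=\frac k2\log\!\big(\tfrac{\lambda}{2\mu}\tfrac{m+2k}{m+k}\big)$, and it is therefore nonnegative exactly for $t\geq t_0$. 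But \eqref{condflambdamu} says precisely that $f(M)\subseteq[t_0,+\infty)$, so \eqref{psiprimepsi} holds at every value attained by $f$; and since the proof of Theorem \ref{estCaseStyle} only ever uses \eqref{psiprimepsi} at the maximum point $x_0$ of the auxiliary function $F$, this is all that is needed. (If one insists on the literal hypothesis ``for all $t\in\mathbb R$'', replace $\psi$ on $(-\infty,t_0)$ by the $C^1$ extension $\tilde\psi(t)=\psi(t_0)e^{\frac2m(t_0-t)}-\tfrac{\lambda m}{2}\big(1-e^{\frac2m(t_0-t)}\big)$, which matches $\psi$ to first order at $t_0$ and satisfies $\tilde\psi'+\tfrac2m\tilde\psi+\lambda\equiv0$ on $(-\infty,t_0]$; since $f\geq t_0$ one still has $\Delta_f f=\tilde\psi(f)$, and Theorem \ref{estCaseStyle} applies verbatim.)

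With both hypotheses in place, Theorem \ref{estCaseStyle} gives, for all $q\in M$ and all $T>0$,
\[
|\nabla f|^2(q)\;\leq\;\frac{mk}{m+k}\left[\frac{2(m+k+6)}{T^2}-\frac{4\sqrt3}{9}\,\frac{\lambda}{Z}\,\frac1T\right];
\]
since $\lambda/Z<0$, the bracket is a sum of two positive terms, each tending to $0$ as $T\to+\infty$, so letting $T\to+\infty$ yields $|\nabla f|(q)=0$ for every $q$. Thus $f$ is constant, the quasi-Einstein structure is trivial, and \eqref{EinsteinConstants} then forces $e^{\frac2k f}=\lambda/\mu$ (compatible with \eqref{condflambdamu} because $\lambda/\mu\geq\tfrac{\lambda}{2\mu}\tfrac{m+2k}{m+k}$ for $m\geq0$); in particular $u=e^{-f/k}$ is a positive constant, so $N=M\times_u F$ is isometric to the Riemannian product of $M$ with $(F,u^2g_F)$, i.e., after rescaling the fibre metric, a genuine Riemannian product. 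I expect the only genuinely delicate point to be this reconciliation of the two-sided formulation of \eqref{psiprimepsi} with the one-sided bound \eqref{condflambdamu}, handled by the localization/extension remark above; everything else is bookkeeping layered on Theorem \ref{estCaseStyle}.
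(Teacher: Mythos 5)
Your proposal is correct and follows essentially the same route as the paper: invoke Theorem \ref{EwpandQEcharacterization} to get $\Delta_f f=\psi(f)$ with $\psi(t)=k\lambda-k\mu e^{\frac2k t}$, check that \eqref{condflambdamu} yields \eqref{psiprimepsi}, apply Theorem \ref{estCaseStyle}, and let $T\to+\infty$. Your extra care in reconciling the one-sided bound on $f$ with the ``for all $t\in\mathbb R$'' phrasing of \eqref{psiprimepsi} is a legitimate refinement of a point the paper passes over silently, but it does not change the argument.
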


    \begin{proof}
      Since $N$ is an Einstein warped product, from Theorem \ref{EwpandQEcharacterization}  we know that $f$ satisfies the equation
      \begin{equation*}
        \Delta_ff = k\lambda -k\mu e^{\frac{2}{k} f},
      \end{equation*}
      so, with the notation used above, we have that  $\psi(t) = k\lambda -k\mu e^{\frac {2}{k} t}$. Equation \eqref{condflambdamu} implies  \eqref{psiprimepsi}, so we can apply Theorem \ref{estCaseStyle}. Since $M$ is complete, letting $T \to +\infty$ we obtain the thesis. 
    \end{proof}

\subsection{Applications of other gradient estimates}
In the same spirit, adapting results from \cite{MasRig} we also achieve other triviality results from \textit{a-priori} estimates for the gradient of global solutions of equations slightly more general than \eqref{Poisson}. In particular as a consequence of Theorem 2.3 in \cite{MasRig} we deduce the following
 \begin{theorem}\label{TH_conseq_th}
    Let $N = M^m \times_u F^k$ a complete Einstein warped product with Einstein constant $\lambda < 0$, warping function $u = e^{-f/k}$ and Einstein fibre $F^k$ with Einstein constant $\mu < 0$. Suppose that
    \begin{equation}\label{4_eq_condflambdamu}
      f \geq \frac k2 \log\pa{\frac{\lambda}{2\mu}} \qquad \text{for all } x \in M
    \end{equation}
    and that
    \begin{equation}
      \abs{f} \leq D\pa{1 + r(x)}^\nu
    \end{equation}
    for some $D \geq 0, \, \nu \in \mathbb{R}$. Then $N$ is simply a Riemannian product(up to a rescaling of the metric on $F$), provided
    \begin{equation}
      0 \leq \nu < 1.
    \end{equation}
    \end{theorem}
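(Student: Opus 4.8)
The plan is to run the argument of Theorem~\ref{ExtCase} again, but to feed the PDE satisfied by $f$ into Theorem~2.3 of \cite{MasRig} rather than into the Case-type estimate of Theorem~\ref{estCaseStyle}. The point is that the former replaces the \emph{global} pointwise hypothesis \eqref{psiprimepsi} on the nonlinearity by the weaker requirement that the relevant combination be bounded below \emph{along the solution}, at the price of a polynomial growth bound on $f$ --- which is exactly the information supplied by $|f|\le D(1+r(x))^\nu$. First I would record, via Theorem~\ref{EwpandQEcharacterization}, that since $N$ is a complete Einstein warped product with warping function $u=e^{-f/k}$, the weighted manifold $(M^m,g_M,e^{-f}\,d\mathrm{vol})$ is quasi-Einstein, so $Ric_f^k=\lambda g_M$ (in particular $Ric_f^k\ge\lambda=-(m+k-1)Z^2$ with $Z:=\sqrt{-\lambda/(m+k-1)}>0$), and that $f$ solves $\Delta_f f=\psi(f)$ with $\psi(t):=k\lambda-k\mu e^{\frac2k t}$; moreover completeness of $M$ makes $N$ complete.

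Next I would verify the sign condition needed to invoke Theorem~2.3 of \cite{MasRig}. Running the Bochner identity exactly as in the proof of Theorem~\ref{estCaseStyle} gives, on all of $M$,
\begin{equation*}
\frac12\Delta_f|\nabla f|^2 \ge \frac1m\psi^2(f)+\pa{\psi'(f)+\frac2m\psi(f)+\lambda}|\nabla f|^2+\pa{\frac1m+\frac1k}|\nabla f|^4.
\end{equation*}
Since $\psi'(t)=-2\mu e^{\frac2k t}$, one computes
\begin{equation*}
\psi'(t)+\frac2m\psi(t)+\lambda=\frac{m+2k}{m}\,\lambda-\frac{m+k}{m}\,2\mu e^{\frac2k t},
\end{equation*}
and the hypothesis \eqref{4_eq_condflambdamu}, together with $\mu<0$, yields $2\mu e^{\frac2k f}\le\lambda$; hence at every point of $M$
\begin{equation*}
\psi'(f)+\frac2m\psi(f)+\lambda\ \ge\ \frac{m+2k}{m}\,\lambda-\frac{m+k}{m}\,\lambda\ =\ \frac{k\lambda}{m}.
\end{equation*}
Dropping the nonnegative term $\frac1m\psi^2(f)$ we conclude $\Delta_f|\nabla f|^2\ge\frac{2k\lambda}{m}|\nabla f|^2+2\pa{\frac1m+\frac1k}|\nabla f|^4$ on $M$, i.e.\ the differential inequality underlying Theorem~2.3 of \cite{MasRig} holds with constant coefficients (note the coefficient of $|\nabla f|^2$ is now negative, which is why the self-improving mechanism of Theorem~\ref{estCaseStyle} is no longer available and the growth hypothesis is needed).

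With $Ric_f^k$ bounded below, the above $|\nabla f|^2$-inequality, and the growth bound $|f|\le D(1+r(x))^\nu$ with $0\le\nu<1$, Theorem~2.3 of \cite{MasRig} then forces $|\nabla f|\equiv0$, so $f$ is constant. Consequently the warping function $u=e^{-f/k}$ is a positive constant $c$, the warped metric $g_N=g_M+u^2g_F$ reduces to $g_M+c^2g_F$, and $N$ is the Riemannian product of $(M,g_M)$ with $(F,c^2g_F)$, that is, a Riemannian product up to a rescaling of the metric on $F$ (consistently, $\Delta_f f=0$ then forces $e^{\frac2k f}=\lambda/\mu$, which is compatible with \eqref{4_eq_condflambdamu} since $\lambda/\mu\ge\lambda/(2\mu)$). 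The main obstacle, as in Theorem~\ref{ExtCase}, is not the computation above but checking that Theorem~2.3 of \cite{MasRig} genuinely applies: one must make sure that $\nu<1$ is precisely the range in which that estimate degenerates to $|\nabla f|\equiv0$, that the exponential nonlinearity $\psi(t)=k\lambda-k\mu e^{2t/k}$ --- unbounded from above and with unbounded derivative --- does not clash with any structural requirement placed there on the right-hand side, and that, since \eqref{4_eq_condflambdamu} controls $f$ only from below, the hypothesis actually invoked is the pointwise differential inequality for $|\nabla f|^2$ rather than a global condition on $\psi$ itself.
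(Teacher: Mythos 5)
Your proposal is correct and follows essentially the same route as the paper: both reduce the statement to Theorem~2.3 of \cite{MasRig} after noting that $f$ solves \eqref{EinsteinConstants} with $\psi(t)=k\lambda-k\mu e^{\frac{2}{k}t}$, the paper simply citing that its conditions (2.23), (2.25), (2.26) hold (with $\delta=0$, $\lambda=-(m+k-1)H^2$, $A=B=e^{-f}$, $\theta=-2$) where you instead re-derive the Bochner inequality and compute explicitly that \eqref{4_eq_condflambdamu} yields $\psi'(f)+\frac{2}{m}\psi(f)+\lambda\geq\frac{k\lambda}{m}$. That computation is correct and accurately identifies why the weaker lower bound on $f$ must be compensated by the sublinear growth hypothesis $0\leq\nu<1$.
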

       \begin{proof}
      Since $N$ is an Einstein warped product, from the previous discussions we know that $f$ satisfies \eqref{EinsteinConstants}. Now, referring to Theorem 2.3 in \cite{MasRig}, condition (2.23) is satisfied (with equality sign) for $\delta=0$ and $\lambda = -(n-1)H^2 = -(m+k-1)H^2$, condition (2.25) is guaranteed by \eqref{4_eq_condflambdamu} and (2.26) is valid for all $\theta \in \mathbb{R}$, since $A=B=e^{-f}$, so we can choose, for instance, $\theta = -2$. Hence $f$ is constant by Theorem 2.3 in \cite{MasRig}.
    \end{proof}
    \begin{remark}
      Note that condition \eqref{4_eq_condflambdamu} is more general than \eqref{condflambdamu}, but in order to obtain triviality in Theorem \ref{TH_conseq_th} we also need to require $f$ to have sublinear growth.
    \end{remark}

\section{A refined version of Theorem 1 in \cite{rimoldi1}}\label{3}

In the present section we state a weighted version of Theorem 1.31 in \cite{prsmax}, which can be proved by minor changes to the proof of this latter, and a sufficient condition for the validity of the full Omori-Yau maximum principle for the $f$-Laplacian; our goal is to deduce a triviality result for complete Einstein warped products, which is a corollary of Theorem 1 in \cite{rimoldi1}, replacing the integrability assumption with weight $e^{-\frac{f}{k}}$ in the aforementioned theorem with a more natural condition.
We recall that a Riemannian manifold $\pa{M, \pair{ , }}$ is said to \emph{satisfy the Omori-Yau maximum principle for the $f$-laplacian} if for each $u \in C^2(M)$ such that $u^* = \sup_M u < +\infty$ there exists a sequence $\set{x_k} \subset M$ such that
\[
(i) \, u(x_k) > u^* - \frac 1k, \quad (ii) \, \abs{\nabla u(x_k)} < \frac 1k, \quad (iii) \, \Delta_f u(x_k) < \frac 1k
\]
for each $k \in \mathbb{N}$.

\begin{theorem}\label{Motomiya}Assume on the complete weighted manifold $(M, g_M, e^{-f}d\rm{vol})$ the validity of the full Omori-Yau maximum principle for the $f$-Laplacian. Let $v\in C^2(M)$ be a solution of the differential inequality
\[
\ \Delta_f v\geq \Phi(v,\abs{\nabla v}),
\]
with $\Phi(t,y)$ continuous in $t$, $C^2$ in $y$ and such that
\[
\ \frac{\partial \Phi}{\partial y}(t,y)\geq 0.
\]
Set $\varphi(t)=\Phi(t,0)$. Then a sufficient condition to guarantee that
\[
\ v^*=\sup_M v<+\infty
\]
is the existence of a continuous function $F$ positive on $[a,+\infty)$ for some $a\in\mathbb{R}$ satisfying
\begin{equation}
\left\{\int_a^tF(s)ds\right\}^{-\frac{1}{2}}\in L^1(+\infty),
\end{equation}
\begin{equation}
\limsup_{t\rightarrow+\infty}\frac{\int_a^tF(s)ds}{tF(t)}<+\infty,
\end{equation}
\begin{equation}
\liminf_{t\to+\infty}\frac{\varphi(t)}{F(t)}>0
\end{equation}
and
\begin{equation}
\liminf_{t\to+\infty}\frac{\left\{\int_a^tF(s)ds\right\}^{\frac{1}{2}}}{F(t)}\left.\frac{\partial \Phi}{\partial y}\right|_{(t,0)}>-\infty.
\end{equation}
Furthermore in this case
\[
\ \varphi(v^*)\leq 0.
\]
\end{theorem}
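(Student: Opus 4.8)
The plan is to argue by contradiction and reduce the whole statement to a single application of the Omori--Yau maximum principle for $\Delta_f$, exactly as in the proof of Theorem 1.31 of \cite{prsmax}: the only structural input that changes in the weighted setting is the chain rule
\[
\Delta_f\bigl(\alpha(v)\bigr)=\alpha'(v)\,\Delta_f v+\alpha''(v)\,\abs{\nabla v}^2,
\]
which has the same shape as the unweighted one, so no genuinely new estimate is required.

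First I would assume $v^*=+\infty$ and build the auxiliary test function. Set $\sigma(t)=\int_a^tF(s)\,ds$, which is $C^1$, strictly increasing and positive on $(a,+\infty)$, fix $b>a$, and let $\alpha\colon\erre\to\erre$ be a bounded, strictly increasing, $C^2$ function that coincides with $t\mapsto\int_b^t\sigma(s)^{-1/2}\,ds$ for $t\ge b$ (such an $\alpha$ exists, the boundedness on $[b,+\infty)$ being precisely the first condition on $F$); note $\alpha'=\sigma^{-1/2}>0$ and $\alpha''=-\tfrac12\sigma^{-3/2}F<0$ on $[b,+\infty)$. Put $w=\alpha(v)\in C^2(M)$. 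Since $\alpha$ is bounded and increasing and $v^*=+\infty$, one has $w^*=\lim_{t\to+\infty}\alpha(t)<+\infty$, so the Omori--Yau principle for $\Delta_f$ yields $\set{x_k}\subset M$ with $w(x_k)>w^*-\tfrac1k$, $\abs{\nabla w(x_k)}<\tfrac1k$ and $\Delta_f w(x_k)<\tfrac1k$; because $\alpha$ is strictly increasing up to its supremum, the first of these forces $v(x_k)\to+\infty$, so for $k$ large $v(x_k)>b$ and the formulas for $\alpha',\alpha''$ are available at $x_k$.

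Next I would estimate $\Delta_f w(x_k)$ from below. Using the chain rule, $\Delta_f v\ge\Phi(v,\abs{\nabla v})\ge\Phi(v,0)=\varphi(v)$ (the second step by $\partial\Phi/\partial y\ge0$), and $\alpha''<0$ together with the gradient bound $\abs{\nabla v(x_k)}^2<\sigma(v(x_k))/k^2$ (which follows from $\abs{\nabla w(x_k)}=\alpha'(v(x_k))\abs{\nabla v(x_k)}<\tfrac1k$), one finds at $x_k$
\[
\Delta_f w\ \ge\ \alpha'(v)\varphi(v)+\frac{\alpha''(v)}{k^2\alpha'(v)^2}\ =\ \frac{1}{\sqrt{\sigma(v)}}\Bigl(\varphi(v)-\frac{F(v)}{2k^2}\Bigr).
\]
The third condition gives $\varphi(t)\ge c_0F(t)$ for some $c_0>0$ and all large $t$, hence $\Delta_f w(x_k)\ge\tfrac{c_0}{2}\,F(v(x_k))/\sqrt{\sigma(v(x_k))}$ for $k$ large. (The fourth condition is vacuous here because $\partial\Phi/\partial y\ge0$; it is the one that would absorb the first--order term of a Taylor expansion of $\Phi$ in $y$ in the more general setting of \cite{prsmax}.) On the other hand the first condition forces $\sqrt{\sigma(t)}/t\to+\infty$ (an increasing $h$ with $h^{-1}\in L^1(+\infty)$ satisfies $h(t)/t\to+\infty$) while the second gives $F(t)\ge c\,\sigma(t)/t$ for large $t$, so $F(t)/\sqrt{\sigma(t)}\ge c\,\sqrt{\sigma(t)}/t\to+\infty$. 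Since $v(x_k)\to+\infty$, this contradicts $\Delta_f w(x_k)<\tfrac1k\to0$, so $v^*<+\infty$.

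Finally, with $v^*<+\infty$ I would apply the Omori--Yau principle for $\Delta_f$ directly to $v$, obtaining $\set{y_k}$ with $v(y_k)\to v^*$, $\abs{\nabla v(y_k)}\to0$ and $\Delta_f v(y_k)<\tfrac1k$; then $\Phi(v(y_k),\abs{\nabla v(y_k)})\le\Delta_f v(y_k)<\tfrac1k$ and letting $k\to\infty$ the continuity of $\Phi$ gives $\varphi(v^*)=\Phi(v^*,0)\le0$. I do not expect a real obstacle: the only new feature relative to the scalar Keller--Osserman argument is the gradient term $\alpha''(v)\abs{\nabla v}^2$ coming from the change of variables, and it is controlled exactly by the second (and, in full generality, the fourth) hypothesis on $F$; everything else is the proof of Theorem 1.31 in \cite{prsmax} with $\Delta$ and the classical Omori--Yau principle replaced by their weighted analogues.
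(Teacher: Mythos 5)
Your argument is correct and is precisely the adaptation of the proof of Theorem 1.31 in \cite{prsmax} that the paper invokes without writing out: the weighted chain rule $\Delta_f(\alpha(v))=\alpha'(v)\Delta_f v+\alpha''(v)|\nabla v|^2$ is indeed the only place the drift term enters, and your use of conditions (1)--(3) to force $F(t)/\{\int_a^tF\}^{1/2}\to+\infty$ along the Omori--Yau sequence is the standard Keller--Osserman mechanism. Your side remark that condition (4) is automatically satisfied under the stated global hypothesis $\partial\Phi/\partial y\ge 0$ (which also lets you bypass the Taylor expansion of $\Phi$ in $y$ used in the general version) is likewise correct.
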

Consider now the equation
\begin{equation}\label{lambdamu}
\Delta_f f=k\lambda-k\mu e^{\frac{2}{k}f}.
\end{equation}
and let $\mu<0$. If we choose $\varphi(t)=\Phi(t, y)=m\lambda-m\mu e^{\frac{2}{k}t}$  and $F(t)=(t-a)^{\sigma}$, with $\sigma>1$, then $F$ satisfies the assumptions of Theorem \ref{Motomiya}. However, to use Theorem \ref{Motomiya}, we have also to assure on $(M, g_M, e^{-f}d\rm{vol})$ the validity of the full Omori-Yau maximum principle for the $f$-laplacian.

We will use the following corollary of Theorem 4.1 in \cite{prrims}.
\begin{corollary}
Let $(M^m, g_M, e^{-f}d\rm{vol})$ be a complete weighted manifold such that
\begin{equation}\label{Ricfm_lowbnd}
Ric_f^k(\nabla r, \nabla r)\geq -(m+k-1)G(r)
\end{equation}
for a smooth positive function $G$ on $[0,+\infty)$, even at the origin and satisfying
\begin{equation}\label{hp5}
\begin{array}{lll}
&\left(i\right)\,G\left(0\right)>0&\left(ii\right)\,G^{\prime}\left(t\right)\geq 0 \textrm{\,\,on\,\,} \left[0,+\infty\right)\\
&\left(iii\right)G\left(t\right)^{-\frac{1}{2}}\notin L^{1}\left(+\infty\right)&\left(iv\right)\, \limsup_{t\rightarrow+\infty}\frac{tG\left(t^{\frac{1}{2}}\right)}{G\left(t\right)}<+\infty.
\end{array}
\end{equation}
Then the Omori-Yau maximum principle for the $f$-Laplacian holds on $M$.
\end{corollary}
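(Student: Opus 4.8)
To prove this corollary I would read it off from Theorem 4.1 in \cite{prrims}, which is precisely a criterion of this shape for the weighted Laplacian: the content is entirely in checking that \eqref{Ricfm_lowbnd} together with (i)--(iv) supplies exactly the ingredients that theorem needs, and that the finiteness of $k$ lets one dispense with any separate hypothesis on the weight $f$. The mechanism is the usual Khas'minskii-type completion of the Omori--Yau principle, and I would organize the argument around building an admissible radial test function, both to verify the hypotheses are used as claimed and because that is the natural way to see where (iii) and (iv) enter.

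First I would introduce the comparison model: let $h$ be the solution of the Cauchy problem $h'' - G\,h = 0$, $h(0)=0$, $h'(0)=1$. Positivity of $G$ and $G(0)>0$ (condition (i)) give, by a standard Sturm argument, $h>0$ and $h'>0$ on $(0,+\infty)$, and the monotonicity (ii) keeps the coefficient of the equation under control. Next, since $k<+\infty$, the hypothesis $Ric_f^k(\nabla r,\nabla r)\ge -(m+k-1)G(r)$ feeds into the $f$-Laplacian comparison theorem (see \cite{qian}, \cite{mrs}; cf.\ \eqref{fLaplcomparisonZ}), which gives
\[
\Delta_f r \le (m+k-1)\,\frac{h'(r)}{h(r)}
\]
away from the cut locus of the centre, and in the barrier sense across it via the Calabi trick \cite{calabi}. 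This is the one and only place the curvature assumption is used, and it is the precise statement that, for finite $k$, $Ric_f^k$ plays the role of the Ricci tensor of a manifold of dimension $m+k$; consequently the argument of Theorem 4.1 in \cite{prrims} applies with $m$ replaced by $m+k$ and $\Delta$ by $\Delta_f$, and with no assumption on $f$.

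Then, following \cite{prrims}, I would consider the radial function $\gamma(x)=\int_0^{r(x)} h(s)^{-1}\bigl(\int_0^{s} h(\tau)\,d\tau\bigr)\,ds$, for which $|\nabla\gamma|=\gamma'(r)=h(r)^{-1}\int_0^{r}h$ and, by the comparison estimate above (note $\gamma'\ge0$), $\Delta_f\gamma\le 1+(m+k-2)\,h'(r)h(r)^{-2}\int_0^{r}h$. The point is that $\gamma$ is then a proper function with bounded gradient and with $\Delta_f\gamma$ bounded above: condition (iii), $G^{-1/2}\notin L^1(+\infty)$, is exactly what forces $\gamma(x)\to+\infty$ as $r(x)\to+\infty$, since $\gamma'$ is comparable to $G(r)^{-1/2}$; while condition (iv) is the growth restriction on $G$ that, together with its monotonicity, yields the two-sided WKB-type control of $h$ needed to bound $\gamma'$ and $h'h^{-2}\int_0^{r}h$. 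Once such a $\gamma$ is available the conclusion is the standard Khas'minskii criterion, which is the content of Theorem 4.1 in \cite{prrims}.

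The step I expect to be the real work — and the only genuinely non-routine point — is extracting from (iv) the asymptotics for the solution $h$ of $h''=Gh$ that make $\gamma$ an admissible test function; this is a self-contained ODE estimate, carried out in \cite{prrims}, which uses the monotonicity of $G$ in an essential way. The cut-locus issue is handled, as usual, by the Calabi trick, and everything else is bookkeeping around the dimension shift from $m$ to $m+k$.
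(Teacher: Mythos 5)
Your overall strategy --- solve the model ODE $h''-Gh=0$, $h(0)=0$, $h'(0)=1$, feed \eqref{Ricfm_lowbnd} into the $f$-Laplacian comparison theorem, and then invoke the Khas'minskii-type criterion of Theorem 4.1 in \cite{prrims} --- is the same as the paper's, but there are two points where your write-up has real gaps rather than just omitted routine detail.

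First, the test function. The paper takes $\gamma=r^2$: the comparison $\Delta_f r\le (m+k-1)h'/h$ together with the ODE estimate $h'/h\le C_1G(r)^{1/2}$ (valid off a compact set under (i), (ii), (iv); this is the content of Proposition 2.3 in \cite{mrs} and of the PRS machinery) gives $\Delta_f r^2\le C_2 rG(r)^{1/2}=C_2\gamma^{1/2}G(\gamma^{1/2})^{1/2}$ and $|\nabla\gamma|=2\gamma^{1/2}$, which is \emph{verbatim} the set of hypotheses (4.1)--(4.3) of Theorem 4.1 in \cite{prrims}, with the same $G$; conditions (iii) and (iv) are then consumed inside that theorem, not in the verification. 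You instead propose $\gamma=\int_0^r h^{-1}\bigl(\int_0^s h\bigr)\,ds$ and assert that it is proper with \emph{bounded} gradient and \emph{bounded} $\Delta_f\gamma$. Properness and $\gamma'\le G(0)^{-1/2}$ do follow (from $h'/h\ge\sqrt{G(0)}$ one gets $\gamma''\le 1-\sqrt{G(0)}\,\gamma'$), but the boundedness of $\Delta_f\gamma\le 1+(m+k-2)(h'/h)\gamma'$ requires a matching \emph{lower} bound $h'/h\gtrsim G(r)^{1/2}$ at the same scale as the upper bound on $\gamma'$, and this WKB-type two-sided control is not what \cite{prrims} proves and is delicate under (i)--(iv) alone (e.g.\ after a steep monotone ramp in $G$, $(h'/h)\gamma'$ can transiently be of order $G^{1/2}$). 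So the step you flag as ``the real work, carried out in \cite{prrims}'' is in fact not carried out there in the form you need; the choice $\gamma=r^2$ exists precisely to avoid it.

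Second, and this is the actual content of the paper's proof: Theorem 4.1 in \cite{prrims} as stated also assumes a bound on $|\nabla f|$, because its hypotheses control $\Delta\gamma$ and the drift term $\langle\nabla\gamma,\nabla f\rangle$ separately. You correctly sense that no hypothesis on $f$ should be needed because the comparison here bounds $\Delta_f\gamma$ directly, but you cannot simply say the theorem ``applies with no assumption on $f$'' --- one must reopen its proof and replace the final estimate of $\Delta u(x_k)-\langle\nabla u,\nabla f\rangle(x_k)$ by one phrased entirely in terms of $\Delta_f\gamma(x_k)$ and $|\nabla\gamma|^2(x_k)$, which is exactly the computation the paper supplies. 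Without that modification the cited theorem does not cover the statement, so this step must be made explicit for the proof to be complete.
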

\begin{proof}
Let $h$ be the solution on $\mathbb{R}_0^+$ of the Cauchy problem
\[
\ \left\{
\begin{array}{ll}h^{\prime\prime}-Gh=0\\
h(0)=0;\,\,h^{\prime}(0)=1.
\end{array}\right.
\]
Then, by Proposition 2.3 in \cite{mrs}, the inequality
\[
\ \Delta_f r\leq-(m+k-1)\frac{h^{\prime}}{h}\leq C_1G(r)^{\frac{1}{2}},
\]
holds pointwise in $M\setminus(cut(o)\cup\{o\})$ for some constant $C_1$. Thus
\begin{equation}\label{f-lap-gamma}
\Delta_f r^2=2r\Delta_fr+2\leq 2+2rC_1G(r)^{\frac{1}{2}}\leq C_2rG(r)^{\frac{1}{2}},
\end{equation}
off a compact set, and the hypotheses (4.1), (4.2) and (4.3) of Theorem 4.1 in \cite{prrims} are satisfied with $\gamma=r^2$. In that theorem it is also assumed a bound on the gradient of $f$, but here we don't need this further hypothesis. Indeed by \eqref{f-lap-gamma} we can replace the last part of the proof of Theorem 4.1 in \cite{prrims} with the following computation. 
\begin{align*}
\Delta_{f}u&\left(x_{k}\right)=\Delta u\left(x_{k}\right)
-\left\langle \nabla u,\nabla f\right\rangle\left(x_{k}\right)\\
\leq&\frac{(u\left(x_{k}\right)-u\left(p\right)+1)}{k}
\left\{\frac{\varphi^\prime(\gamma(x_k))}{\varphi(\gamma(x_k))}\Delta\gamma(x_k)+\frac{1}{k}\left(\frac{\varphi^\prime(\gamma(x_k))}{\varphi(\gamma(x_k))}\right)^2\left|\nabla\gamma\right|^2(x_k)\right\}\\
&-\frac{(u\left(x_{k}\right)-u\left(p\right)+1)}{k}\frac{\varphi^\prime(\gamma(x_k))}{\varphi(\gamma(x_k))}\left\langle \nabla \gamma (x_k), \nabla f(x_k)\right\rangle\\
\leq&\frac{(u\left(x_{k}\right)-u\left(p\right)+1)}{k}
\left\{\frac{\varphi^\prime(\gamma(x_k))}{\varphi(\gamma(x_k))}\Delta_f\gamma(x_k)+\frac{1}{k}\left(\frac{\varphi^\prime(\gamma(x_k))}{\varphi(\gamma(x_k))}\right)^2\left|\nabla\gamma\right|^2(x_k)\right\}\\
\leq&\frac{(u\left(x_{k}\right)-u\left(p\right)+1)}{k}
\left\{\frac{c}{\gamma^{1/2}G\left(\gamma^{1/2}\right)^{1/2}}C_2\gamma^{1/2}
G\left(\gamma^{1/2}\right)^{1/2}\right.\\
&\left.+\frac{1}{k}\cdot\frac{c^{2}}{\gamma G\left(\gamma^{1/2}\right)}A^{2}
\gamma\right\},
\end{align*}
and the RHS tends to zero as $k\rightarrow+\infty$.
\end{proof}
Hence, choosing $G(t)=t^2+\frac{|\lambda|+\varepsilon}{m+k-1}$, for some $\varepsilon>0$, we obtain that the full Omori--Yau maximum principle for the $f$--laplacian holds on a generic quasi--Einstein manifold. 

As an application of Theorem \ref{Motomiya} we can deduce the following result.

\begin{corollary}\label{eqvol}
Let $N^{m+k}=M^{m}\times_{u}F^{k}$ be a complete Einstein warped product with non-positive scalar curvature $(m+k)\lambda=\, ^{N}S\leq0$, warping function $u(x)=e^{-\frac{f\left(  x\right)  }{k}}$ satisfying $\inf_M f=f_*>-\infty$ and complete Einstein fibre $F$. Suppose also that $^FS<0$. Then $f^*<+\infty$. In particular Riemannian volumes are equivalent to $f$--weighted volumes.
\end{corollary}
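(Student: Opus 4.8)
The plan is to apply Theorem \ref{Motomiya} to the function $v = f$ on the quasi-Einstein manifold $(M^m, g_M, e^{-f}d\mathrm{vol})$ underlying the Einstein warped product $N$, and to extract the conclusion $f^* = \sup_M f < +\infty$.

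\textbf{Step 1: Verify the Omori--Yau maximum principle.} By Theorem \ref{EwpandQEcharacterization}, $f$ satisfies $\Delta_f f = k\lambda - k\mu e^{\frac{2}{k}f}$, and by \eqref{QE} we have $Ric_f^k = \lambda g_M$ with $\lambda \leq 0$, so in particular $Ric_f^k(\nabla r, \nabla r) \geq \lambda = -(m+k-1)\frac{|\lambda|}{m+k-1}$. As already observed after the preceding corollary, the choice $G(t) = t^2 + \frac{|\lambda| + \varepsilon}{m+k-1}$ for some $\varepsilon > 0$ is a smooth positive function on $[0,+\infty)$, even at the origin, with $G(0) > 0$, $G' \geq 0$, $G^{-1/2} \notin L^1(+\infty)$, and $\limsup_{t\to+\infty} \frac{t\,G(t^{1/2})}{G(t)} = 1 < +\infty$. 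Hence hypotheses \eqref{hp5} hold and the full Omori--Yau maximum principle for the $f$-Laplacian is valid on $M$.

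\textbf{Step 2: Set up and check the hypotheses of Theorem \ref{Motomiya}.} We take $v = f$, $\Phi(t,y) = \varphi(t) = k\lambda - k\mu e^{\frac{2}{k}t}$ (independent of $y$, so $\frac{\partial \Phi}{\partial y} \equiv 0 \geq 0$ trivially, and the final condition \eqref{...} involving $\frac{\partial\Phi}{\partial y}|_{(t,0)}$ holds vacuously since that expression is $0$). Since $^FS = k\mu < 0$ we have $\mu < 0$, so $-k\mu > 0$ and $\varphi$ is an increasing exponential-type function tending to $+\infty$. Choose $F(t) = (t - a)^{\sigma}$ with $\sigma > 1$ and $a \in \mathbb{R}$ arbitrary. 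Then $\int_a^t F(s)\,ds = \frac{(t-a)^{\sigma+1}}{\sigma+1}$, so $\{\int_a^t F\}^{-1/2}$ behaves like $(t-a)^{-(\sigma+1)/2}$ with exponent $> 1$, hence lies in $L^1(+\infty)$; the ratio $\frac{\int_a^t F}{t F(t)} \sim \frac{1}{\sigma+1}$ stays bounded; and $\liminf_{t\to+\infty} \frac{\varphi(t)}{F(t)} = \liminf_{t\to+\infty} \frac{k\lambda - k\mu e^{2t/k}}{(t-a)^\sigma} = +\infty > 0$ since the exponential dominates any polynomial. Thus all four displayed hypotheses of Theorem \ref{Motomiya} are satisfied.

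\textbf{Step 3: Conclude.} Theorem \ref{Motomiya} now gives $f^* = \sup_M f < +\infty$. For the last assertion: since $\inf_M f = f_* > -\infty$ by hypothesis and $f^* < +\infty$, the weight $e^{-f}$ is bounded above by $e^{-f_*}$ and below by $e^{-f^*}$, both positive constants; therefore for any Borel set $A \subseteq M$ one has $e^{-f^*}\mathrm{vol}(A) \leq \int_A e^{-f}\,d\mathrm{vol} \leq e^{-f_*}\mathrm{vol}(A)$, i.e. the $f$-weighted volume and the Riemannian volume are mutually comparable. I do not foresee a serious obstacle here: the only place requiring a little care is confirming that the edge cases in Theorem \ref{Motomiya} (the condition on $\partial_y\Phi$ and the hypothesis $\partial_y\Phi \geq 0$) are legitimately vacuous for our $y$-independent $\Phi$, and that the Omori--Yau principle from the corollary indeed applies verbatim with the stated $G$; both are routine.
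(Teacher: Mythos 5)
Your proposal is correct and follows essentially the same route as the paper: the authors likewise apply Theorem \ref{Motomiya} to equation \eqref{lambdamu} with $F(t)=(t-a)^{\sigma}$, $\sigma>1$, after securing the Omori--Yau principle via the choice $G(t)=t^2+\frac{|\lambda|+\varepsilon}{m+k-1}$, and then deduce the volume equivalence from $f_*>-\infty$ and $f^*<+\infty$ exactly as you do. The only (immaterial) discrepancy is that the paper writes $\varphi(t)=m\lambda-m\mu e^{\frac{2}{k}t}$ where your $k\lambda-k\mu e^{\frac{2}{k}t}$ is the constant coming from \eqref{lambdamu}; this affects none of the verifications.
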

\begin{proof}
Applying Theorem \ref{Motomiya} to equation \eqref{lambdamu} we obtain that $f^*<+\infty$. Since, by assumption, we know also that $f_*>-\infty$ the thesis follows easily.
\end{proof}

From Corollary \ref{eqvol} we immediately get the following corollary of Theorem 1 in \cite{rimoldi1}.
\begin{corollary}\label{intchange}
Let $N^{m+k}=M^{m}\times_{u}F^{k}$ be a complete Einstein warped product with non-positive scalar curvature $(m+k)\lambda=\, ^{N}S\leq0$, warping function $u(x)=e^{-\frac{f\left(  x\right)  }{k}}$ satisfying $\inf_M f=f_*>-\infty$ and complete Einstein fibre $F$. Suppose also that $^FS<0$. Then $N$ is simply a Riemannian product if the base manifold $M$ is complete and non-compact, the warping function satisfies $f\in L^p(M, e^{-f}d\rm{vol})$, for
some $1<p<+\infty$, and $f\left(  x_{0}\right)  \leq0$ for some point $x_{0}\in M$.
\end{corollary}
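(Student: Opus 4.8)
The plan is to combine Corollary~\ref{eqvol} with the already-available Theorem~1 in \cite{rimoldi1}. First, I would observe that all the hypotheses of Corollary~\ref{eqvol} are present here: $N = M^m \times_u F^k$ is a complete Einstein warped product with non-positive Einstein constant $\lambda$, the warping function $u = e^{-f/k}$ satisfies $\inf_M f = f_* > -\infty$ (equivalently $u$ is bounded away from $0$ and from above), and the fibre has $^FS < 0$, i.e.\ $\mu < 0$. Hence Corollary~\ref{eqvol} applies and gives $f^* = \sup_M f < +\infty$. Together with $f_* > -\infty$ this means $f$ is a bounded function on $M$.

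Next I would cash in this two-sided bound on $f$ in the integrability hypothesis of Theorem~1 of \cite{rimoldi1}. That theorem requires an $L^p$-control of $u = e^{-f/k}$ with respect to the weighted measure $e^{-f}d\mathrm{vol}$ for some $1 < p < +\infty$; since $f$ is now bounded, $e^{-f/k}$ is comparable to a positive constant, so the finiteness of $\int_M e^{-f/k \cdot p}\,e^{-f}d\mathrm{vol}$ is \emph{equivalent} to the finiteness of $\int_M e^{-f}d\mathrm{vol} = \int_M (e^{-f/k})^k \, d\mathrm{vol}$, i.e.\ to $f \in L^p(M, e^{-f}d\mathrm{vol})$ being replaceable by the same integrability written in the form stated. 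More precisely: the hypothesis $f \in L^p(M, e^{-f}d\mathrm{vol})$ for some $1 < p < +\infty$, combined with the boundedness of $f$, yields exactly the weighted integrability condition (with weight $e^{-f/k}$) required by Theorem~1 in \cite{rimoldi1}. One must also keep track of the remaining hypotheses of that theorem: $M$ complete and non-compact, $^NS \le 0$, $^FS < 0$, and a normalization point $x_0$ with $f(x_0) \le 0$ --- all of which are in the statement. Then Theorem~1 of \cite{rimoldi1} gives that $f$ is constant, whence the quasi-Einstein structure is trivial and $N$ splits as a Riemannian product (after rescaling the fibre metric), as in the previous corollaries.

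I expect the main point requiring care --- though not a genuine obstacle --- to be the precise matching of the integrability hypothesis here (namely $f \in L^p(M, e^{-f}d\mathrm{vol})$, a condition on $f$ itself) with the one in \cite{rimoldi1} (a condition on $u = e^{-f/k}$ in a weighted $L^p$ space). The bridge is exactly Corollary~\ref{eqvol}: without an a~priori bound on $f$ these two conditions are genuinely different, but once $f_* > -\infty$ and $f^* < +\infty$ the weight $e^{-f/k}$ is two-sidedly pinched between positive constants, and the two integrability requirements coincide. I would make this comparison explicit in a sentence, then quote Theorem~1 of \cite{rimoldi1} verbatim to conclude. This is precisely the sense in which, as advertised in the introduction, one replaces the integrability assumption with weight $e^{-f/k}$ by the ``more natural'' condition $f \in L^p(M, e^{-f}d\mathrm{vol})$.

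\begin{proof}
Under the stated assumptions we are in position to apply Corollary~\ref{eqvol}, which yields $f^* = \sup_M f < +\infty$; since also $f_* = \inf_M f > -\infty$ by hypothesis, the function $f$ is bounded on $M$, and consequently the warping function $u = e^{-f/k}$ satisfies $0 < c_1 \le u \le c_2 < +\infty$ on $M$ for suitable constants. In particular, for $1 < p < +\infty$,
\[
\int_M u^{p}\, e^{-f}d\mathrm{vol} \asymp \int_M e^{-f}d\mathrm{vol} \asymp \int_M |f|^{p}\, e^{-f}d\mathrm{vol},
\]
so the assumption $f \in L^p(M, e^{-f}d\mathrm{vol})$ is equivalent to the integrability condition with weight $e^{-f/k}$ appearing in Theorem~1 of \cite{rimoldi1}. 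Since $M$ is complete and non-compact, $^NS = (m+k)\lambda \le 0$, $^FS < 0$, and $f(x_0) \le 0$ for some $x_0 \in M$, all the hypotheses of that theorem are met; hence $f$ is constant. Therefore the quasi-Einstein structure is trivial, and $N = M^m \times_u F^k$ is a Riemannian product, up to a rescaling of the metric on $F$.
\end{proof}
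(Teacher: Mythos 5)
Your argument is correct and is precisely the paper's own (the paper gives no written proof, only the remark that the statement follows immediately from Corollary~\ref{eqvol} together with Theorem~1 of \cite{rimoldi1}): Corollary~\ref{eqvol} yields $f^*<+\infty$, which combined with $f_*>-\infty$ pinches $e^{-f/k}$ between positive constants and turns the hypothesis $f\in L^p(M,e^{-f}d\mathrm{vol})$ into the $e^{-f/k}$-weighted integrability required by Theorem~1 of \cite{rimoldi1}. The only blemish is the displayed chain of comparisons, whose last equivalence $\int_M e^{-f}d\mathrm{vol}\asymp\int_M |f|^p e^{-f}d\mathrm{vol}$ need not hold (e.g.\ where $f$ vanishes); but only the one implication from $f\in L^p(M,e^{-f}d\mathrm{vol})$ to the weighted condition is needed, and that follows from the two-sided bound on $f$ exactly as you say.
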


From the Motomiya--type theorem we deduce also the following result.
\begin{theorem}
Let $N^{m+k}=M^m\times_uF^k$ be a complete Einstein warped product with non--positive scalar curvature $(m+k)\lambda=\,^NS\leq 0$, warping function $u(x)=e^{-\frac{f(x)}{k}}$ satisfying $\inf_M f=f_*>-\infty$ and complete Einstein fibre $F$ with $\,^FS<0$. Then $\,^MS_*=m\lambda$.
\end{theorem}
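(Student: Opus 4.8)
The plan is to pin down the warping function $f$ completely — using that $M$ is a complete quasi--Einstein manifold, so that both Corollary \ref{eqvol} and the full Omori--Yau maximum principle for $\Delta_f$ are available on $M$ — and then to read off $^MS$ from the trace of the quasi--Einstein equation. First I would record the relevant identities. Since $N$ is an Einstein warped product, Theorem \ref{EwpandQEcharacterization} gives that $(M,g_M,e^{-f}d\mathrm{vol})$ is quasi--Einstein, $Ric+\hess(f)-\frac1k\,df\otimes df=\lambda g_M$, and that $\Delta_f f=k\lambda-k\mu e^{\frac2k f}$ with $\mu$ the Einstein constant of $F$; since $^FS=k\mu<0$ we have $\mu<0$, and $t\mapsto k\lambda-k\mu e^{\frac2k t}$ is strictly increasing. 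Tracing the quasi--Einstein equation with respect to $g_M$ and substituting $\Delta_f f=\Delta f-|\nabla f|^2$ yields
\[
^MS=m\lambda-\Delta_f f-\frac{k-1}{k}|\nabla f|^2\qquad\text{on }M ,
\]
so it suffices to prove that $f$ is constant.

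Next I would observe that $M$ is complete (the leaf $M\times\{q\}$ is totally geodesic in the complete manifold $N$ and isometric to $(M,g_M)$) and quasi--Einstein, so, as noted above, the full Omori--Yau maximum principle for $\Delta_f$ holds on $M$ (take $G(t)=t^2+\frac{|\lambda|+\varepsilon}{m+k-1}$ in the Omori--Yau criterion, using $Ric_f^k(\nabla r,\nabla r)=\lambda\ge-(m+k-1)G(r)$); in particular Corollary \ref{eqvol} applies and $f^*=\sup_M f<+\infty$. Applying the maximum principle to $f$ gives $\{x_j\}$ with $f(x_j)\to f^*$, $|\nabla f(x_j)|\to0$, $\Delta_f f(x_j)<1/j$, whence, by continuity, $k\lambda-k\mu e^{\frac2k f^*}\le0$, i.e. $e^{\frac2k f^*}\le\lambda/\mu$. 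Since $\inf_M f=f_*>-\infty$ by hypothesis, applying the maximum principle to $-f$ gives $\{y_j\}$ with $f(y_j)\to f_*$, $|\nabla f(y_j)|\to0$, $\Delta_f f(y_j)>-1/j$, whence $k\lambda-k\mu e^{\frac2k f_*}\ge0$, i.e. $e^{\frac2k f_*}\ge\lambda/\mu$. Combining these with $f_*\le f^*$ forces $f_*=f^*$, so $f\equiv\frac k2\log(\lambda/\mu)$ is constant.

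With $f$ constant, $|\nabla f|\equiv0$ and $\Delta_f f\equiv0$, so the displayed identity gives $^MS\equiv m\lambda$; in particular $^MS_*=m\lambda$ (and indeed \eqref{QE} reduces to $Ric=\lambda g_M$ and $N$ is a Riemannian product up to rescaling $F$). The step I expect to be the main obstacle is the middle one: making sure the Omori--Yau maximum principle for $\Delta_f$ is legitimately available on $M$ — deducing completeness of $M$ from that of $N$ and checking the curvature hypothesis of the Omori--Yau criterion — and then reading off the correct sign of $\Delta_f f$ at $\inf_M f$ from condition (iii) applied to $-f$; the remaining comparisons are elementary. As a variant of the final step, one notes that $e^{\frac2k f_*}\ge\lambda/\mu\ge\frac{\lambda}{2\mu}\cdot\frac{m+2k}{m+k}$ already establishes the pointwise hypothesis \eqref{condflambdamu} (here $\lambda<0$, since $e^{\frac2k f^*}\le\lambda/\mu$ forces $\lambda/\mu>0$), so Theorem \ref{ExtCase} applies and gives that $N$ is a Riemannian product, hence again $^MS\equiv m\lambda$.
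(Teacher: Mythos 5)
Your argument is correct, but it follows a genuinely different route from the paper's and in fact proves more. The paper gets $f^*<+\infty$ from Theorem \ref{Motomiya}, uses this to bound the $\hat f$-weighted volume (with $\hat f=f/k$), invokes Qian's volume estimates and Theorem 9 in \cite{prims} to obtain the \emph{weak} maximum principle at infinity for $\Delta_{\hat f}$, and then applies it to $-f$ at $\inf_M f$: combining the resulting sequence with the traced equation $\Delta_{\hat f}f=m\lambda-{}^MS$ and the a priori bound $S_*\geq m\lambda$ from Theorem 3 in \cite{rimoldi1}, it squeezes out $S_*=m\lambda$ without ever showing that $f$ is constant. You instead work directly on the structural equation \eqref{EinsteinConstants}, whose right-hand side $\psi(t)=k\lambda-k\mu e^{2t/k}$ is strictly increasing because $\mu<0$: the Omori--Yau principle for $\Delta_f$ (legitimately available here, exactly as the paper establishes with $G(t)=t^2+\frac{|\lambda|+\varepsilon}{m+k-1}$, and applicable to $f$ at its supremum thanks to Corollary \ref{eqvol} and to $-f$ at its infimum thanks to the hypothesis $f_*>-\infty$) yields $\psi(f_*)\geq 0\geq\psi(f^*)$, hence $f_*=f^*$ and $f$ is constant, so ${}^MS\equiv m\lambda$ and $N$ is a trivial product. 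Note that you only use conditions (i) and (iii) of the principle, so the weak maximum principle at infinity already suffices. Your conclusion is strictly stronger than the stated one, but it is consistent: the known nontrivial examples with $\lambda<0$, $\mu<0$ have warping function unbounded above, i.e.\ $f_*=-\infty$, so they are excluded by the hypotheses; and when $\lambda=0$ both your inequalities and the paper's conclusion $\varphi(f^*)\leq0$ from Theorem \ref{Motomiya} show the hypotheses are vacuous. What your approach buys is self-containedness (no appeal to the scalar curvature estimates of Theorem 3 in \cite{rimoldi1} nor to the $\hat f$-volume comparison) and the extra rigidity; what the paper's approach buys is that it works entirely at the level of ${}^MS$ and would survive in situations where one only controls the traced equation. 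Your closing variant via \eqref{condflambdamu} and Theorem \ref{ExtCase} is also valid (for $\lambda<0$ strictly), since $\frac{1}{2}\frac{m+2k}{m+k}\leq 1$ makes the required lower bound on $f$ weaker than $\frac k2\log(\lambda/\mu)\leq f_*$.
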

\begin{proof}
As above, by Theorem \ref{Motomiya}, we have that $f^{*}<+\infty$ and so
\[
\ vol_{\hat{f}}(M)\leq vol_f(M)e^{\frac{k-1}{k}f^{*}}
\]
From the weighted volume estimates in \cite{qian} and Theorem 9 in \cite{prims} we get that the weak maximum principle at infinity for the $\hat{f}$-Laplacian holds on $M$. Hence we can construct a sequence $\{x_n\}$ such that $f(x_n)\to f_*$ and $\Delta_{\hat{f}}f(x_n)\geq-\frac{1}{n}$. Thus, since tracing \eqref{QE} we have that $\Delta_{\hat{f}}f=m\lambda-\,^MS$, we obtain that
\[
\ -\frac{1}{n}\leq m\lambda-\,^MS(x_n)\leq m\lambda-\,^MS_{*}\leq0,
\]
where in the last inequality we have used the estimates of Theorem 3 in \cite{rimoldi1}.
The conclusion now follows taking the limit for $n\to+\infty$.
\end{proof}

\section{A Liouville result for 1-quasi-Einstein manifolds}\label{4}
In this section we obtain another Liouville result for $k$-quasi-Einstein manifolds, in case $k=1$. It is well known that any 1-quasi-Einstein metric which has $\mu=0$ (and so corresponds to a warped product Einstein metric) necessarily has constant scalar curvature $R\equiv(m-1)\lambda$. This follows simply by taking the trace of the quasi-Einstein equation \eqref{QE} and using equation \eqref{EinsteinConstants}.  Warped product Einstein metrics which correspond to these latters are more commonly known as \emph{static metrics} and have been studied extensively due to their connections to scalar curvature, the positive mass theorem, and general relativity, (see e.g. \cite{An}, \cite{Co} and references indicated in the recent preprint \cite{HePetWy}). 

As observed in \cite{caseCWM}, also the study of quasi-Einstein metrics with $k=1$ and $\mu\neq0$ is interesting. Since we cannot apply Theorem \ref{EwpandQEcharacterization} to construct the related Einstein warped products, their existence proves that, even restricting to integer hidden dimension $k$, quasi--Einstein manifolds form a strictly larger class of manifolds that those which are the base of an Einstein warped product manifold. For some examples of these manifolds, constructed in the more general setting of conformally warped manifolds, see the last section of \cite{caseCWM}.

Our Liouville result, which is relevant exactly in the $\mu\neq0$ case, will follow from an adaptation to the $f$-Laplacian under weighted volume growth conditions of Theorem A in \cite{PRS_JFA1}. This can be deduced from the proof of the latter, making minor modifications in the proofs of Theorem A, Lemma 1.2, Theorem A$^{\prime}$ in \cite{PRS_GAFA} and Theorem 2.5 in \cite{PRS_JFA1}.
\begin{theorem}\label{thJFA_f}
Let $\phi$ be a continuous function on $[0,+\infty)$ satisfying the conditions
\begin{equation}\label{phi}
\begin{array}{llll}
&\left(i\right)\,\phi\left(0\right)=\phi(a)=0,&\left(ii\right)\,\phi\left(s\right)> 0\textrm{\,\,in\,\,} \left(0,a\right),&\left(iii\right)\,\phi(s)<0 \textrm{\,\,in\,\,} \left(a,+\infty\right),
\end{array}
\end{equation}
for some $a>0$, and
\begin{equation}\label{phi1}
\liminf_{s\to+\infty}\frac{-\phi(s)}{s^{\sigma}}>0,
\end{equation}
for some $\sigma>1$; let also $b(x)\in C^0(M)$ and suppose that
\begin{equation*}
b(x)\geq \frac{C}{(1+r(x))^\mu}\textrm{\,\,on\,\,} M,
\end{equation*}
for some $C>0$ and $0\leq\mu<2$. Let $u$ be a non-negative solution of
\begin{equation}\label{eq_JFA_f}
\Delta_f u=-b(x)\phi(u) \textrm{\,\,on\,\,} M.
\end{equation}
Assume that
\begin{equation}\label{vol_fgrowth}
\liminf_{r\to+\infty}\frac{\log vol_f(B_r)}{r^{2-\mu}}<+\infty
\end{equation}
and, if
\begin{equation*}
(vol_f(\partial B_r))^{-1}\in L^1(+\infty)
\end{equation*}
assume furthermore that
\begin{equation*}
\phi(t)\geq ct^{\xi}\qquad0<t\ll1
\end{equation*}
for some $\xi>0$ and $c>0$. Finally, if $\xi\geq1$ suppose also that
\begin{equation*}
u(x)\geq Dr(x)^{-\theta},\qquad r(x)\gg 1
\end{equation*}
for some $\theta\geq 0$, $D>0$ and that
\begin{equation*}
\liminf_{r\to+\infty}\frac{\log vol_f(B_r)}{r^{2-\theta(\xi-1+\varepsilon)-\mu}}<+\infty
\end{equation*}
for some $\varepsilon>0$. Then $u$ is constant and identically equal to $0$ or $a$.
\end{theorem}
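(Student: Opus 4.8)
The plan is to repeat, essentially verbatim, the proof of Theorem A of \cite{PRS_JFA1} — together with the auxiliary Theorem A, Lemma 1.2 and Theorem A$'$ of \cite{PRS_GAFA} and Theorem 2.5 of \cite{PRS_JFA1} — with the Laplace--Beltrami operator $\Delta$ replaced throughout by the drifted Laplacian $\Delta_f$ and the Riemannian measure $d\mathrm{vol}$ replaced by the weighted measure $e^{-f}\,d\mathrm{vol}$. This is legitimate because the only analytic facts about $\Delta$ entering those proofs are the divergence theorem and the co-area formula, and both have weighted versions: $\Delta_f$ is formally self-adjoint in $L^2(M,e^{-f}\,d\mathrm{vol})$, so that $\int_M(\Delta_f\psi)\chi\,e^{-f}\,d\mathrm{vol}=-\int_M\langle\nabla\psi,\nabla\chi\rangle\,e^{-f}\,d\mathrm{vol}$ for $\chi$ of compact support, and $\tfrac{d}{dr}vol_f(B_r)=vol_f(\partial B_r)$. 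Equally important, no curvature hypothesis is used in \cite{PRS_JFA1}: all the geometry enters only through the volume growth, so no weighted Laplacian comparison theorem is needed and the conditions \eqref{vol_fgrowth}, $b(x)\ge C(1+r(x))^{-\mu}$, and the behaviour of $\phi$ play exactly the roles they play in the unweighted statement. I would organize the argument in two steps: first $\sup_M u\le a$, then the dichotomy $u\equiv 0$ or $u\equiv a$.

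\emph{Step 1.} On the open set $\{u>a\}$ the sign condition \eqref{phi}(iii) together with the superlinear decay \eqref{phi1} provides, as in \cite{PRS_JFA1}, a Keller--Osserman-type inequality: combining it with $b(x)\ge C(1+r(x))^{-\mu}$ one gets
\[
\Delta_f u\ \ge\ \frac{C'}{(1+r(x))^{\mu}}\,\Lambda\big((u-a)_+\big)\qquad\text{on }M,
\]
with $\Lambda(t)\ge c\,t^{\sigma}$ for $t$ large and $\sigma>1$, so that $\Lambda$ satisfies the Keller--Osserman integral condition. Feeding this into the weighted analogue of the a priori estimate of \cite{PRS_GAFA} — a cut-off and Young-inequality iteration that only sees the geometry through $vol_f$ and \eqref{vol_fgrowth} — forces $\sup_M(u-a)_+=0$, i.e. $0\le u\le a$ on $M$.

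\emph{Step 2.} Since now $0\le u\le a$, condition \eqref{phi} gives $\phi(u)\ge 0$, hence $\Delta_f u=-b(x)\phi(u)\le 0$: $u$ is a bounded, non-negative $f$-superharmonic function, and any constant value it can assume must be a zero of $\phi$, i.e. $0$ or $a$. If $u$ attains the value $a$, resp. $0$, at an interior point $x_1$, then $x_1$ is an interior maximum, resp. minimum, of the $f$-superharmonic function $u$, and the strong maximum principle for the uniformly elliptic, smooth-coefficient operator $\Delta_f$ makes $u$ constant near $x_1$, hence $u\equiv a$, resp. $u\equiv 0$, on $M$. Otherwise $0<u<a$ everywhere: if $(vol_f(\partial B_r))^{-1}\notin L^1(+\infty)$ then $(M,e^{-f}\,d\mathrm{vol})$ is $f$-parabolic, so the bounded $f$-superharmonic function $u$ is constant, a contradiction; if instead $(vol_f(\partial B_r))^{-1}\in L^1(+\infty)$ one has to rule out $u$ decaying to $0$ at infinity, and it is precisely here that the remaining hypotheses — $\phi(t)\ge c\,t^{\xi}$ for $0<t\ll1$, the lower bound $u(x)\ge D\,r(x)^{-\theta}$, and the refined growth $\log vol_f(B_r)=o\big(r^{2-\theta(\xi-1+\varepsilon)-\mu}\big)$ — are used: the weighted versions of Lemma 1.2 and Theorem A$'$ of \cite{PRS_GAFA} and of Theorem 2.5 of \cite{PRS_JFA1} then yield a contradiction. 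Hence $u\equiv 0$ or $u\equiv a$.

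The hard part will be this last, non-parabolic branch of Step 2: one must check that the capacity/comparison argument of \cite{PRS_GAFA}--\cite{PRS_JFA1}, which balances the decay rate of $u$ near infinity against the growth of $vol_f(\partial B_r)$, really survives the substitution of $\Delta_f$ for $\Delta$ — this is where the more delicate hypotheses (on $\phi$ near the origin, on the lower bound for $u$, and the second volume-growth condition) are consumed. Everything else — Step 1 and the strong-maximum-principle dichotomy — is a routine translation, since the $f$-divergence theorem and the co-area formula are available for the measure $e^{-f}\,d\mathrm{vol}$ and, as noted, no curvature comparison is involved.
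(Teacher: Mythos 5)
Your proposal follows exactly the route the paper itself takes: the authors give no independent argument but simply assert that the theorem follows from Theorem A, Lemma 1.2 and Theorem A$'$ of \cite{PRS_GAFA} and Theorem 2.5 of \cite{PRS_JFA1} with minor modifications, i.e.\ replacing $\Delta$ by $\Delta_f$ and $d\mathrm{vol}$ by $e^{-f}d\mathrm{vol}$, which works precisely because those proofs only use the (weighted) divergence theorem, the co-area formula and volume growth, with no curvature comparison. Your two-step outline is a faithful, somewhat more detailed rendering of that same adaptation.
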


Now, consider a $k$-quasi-Einstein manifold $(M^m,g_M,e^{-f}d\rm{vol})$, $k<+\infty$. We recall that, according to Lemma 4 in \cite{rimoldi1}, setting $\tilde{f}=\frac{k+2}{k}f$, the scalar curvature $S$ of a quasi-Einstein manifold satisfies the following relation,
\begin{equation}\label{eq_Scal1}
\frac{1}{2}\Delta_{\tilde{f}} S= -\frac{k-1}{k}\abs{Ric-\frac{1}{m}Sg_M}^2-\frac{k+m-1}{km}\left(S-m\lambda\right)\left(S-\frac{m(m-1)}{k+m-1}\lambda\right).
\end{equation}
Exploiting \eqref{eq_Scal1}, one can obtain estimates for the infimum of the scalar curvature $S_*=\inf_MS$ (see Theorem 3 in \cite{rimoldi1}). In particular we have that for $\lambda>0$
\begin{equation}\label{estscal1}
\frac{m(m-1)}{m+k-1}<S_*\leq m\lambda,
\end{equation}
and for $\lambda<0$ and $\inf_M f>-\infty$
\begin{equation}\label{estscal2}
m\lambda\leq S_*\leq\frac{m(m-1)}{m+k-1}\lambda.
\end{equation}
In the special case $k=1$ equation \eqref{eq_Scal1} becomes
\begin{equation}\label{eq_Scal2}
\Delta_{\tilde{f}} S= -2(S-m\lambda)(S-(m-1)\lambda).
\end{equation}
Making an essential use of \eqref{eq_Scal2}, we now prove the following theorem which, jointly with \eqref{estscal1} and \eqref{estscal2}, essentially states that, under suitable geometric assumption, when the scalar curvature is confined in a particular interval it has to be constant and identically equal to one of the extremes of the interval. Some extra rigidity in case $\lambda>0$ is also discussed.
\begin{theorem}
Let $(M^m,g_M,e^{-f}d\rm{vol})$ be a geodesically complete $1$- quasi-Einstein manifold with quasi-Einstein constant $\lambda$ and scalar curvature $S$. Set $\tilde{f}=3f$ and suppose that $f_*=\inf_Mf>-\infty$. \\
If
\begin{equation*}
(vol_{\tilde{f}}(\partial B_r))^{-1}\in L^1(+\infty),
\end{equation*}
letting
\begin{equation*}
u(x)=
\begin{cases}
-S(x)+(m-1)\lambda& \lambda<0\\
-S(x)+m\lambda&\lambda>0,
\end{cases}
\end{equation*}
assume furthermore that
\begin{equation*}
u(x)\geq Dr(x)^{-\theta},\qquad r(x)\gg 1
\end{equation*}
for some $\theta\geq 0$, $D>0$, and that
\begin{equation*}
\liminf_{r\to+\infty}\frac{\log vol_{\tilde{f}}(B_r)}{r^{2-\theta\varepsilon}}<+\infty
\end{equation*}
for some $\varepsilon>0$.
\begin{enumerate}
\item[(a)] If $\lambda<0$ and $S\leq(m-1)\lambda$ we obtain that $S$ is constant and identically equal to either $(m-1)\lambda$ or $m\lambda$.
\item[(b)] If $\lambda>0$ and $S\leq m\lambda$ then $S$ is constant, identically equal to $m\lambda$ and $M$ is Einstein.
\end{enumerate}

\end{theorem}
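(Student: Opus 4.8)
The plan is to recast the scalar curvature equation \eqref{eq_Scal2} as an instance of \eqref{eq_JFA_f} for the function $u$ on the weighted manifold $(M^m,g_M,e^{-\tilde f}d\mathrm{vol})$ and then invoke Theorem \ref{thJFA_f}. First I would substitute $S=(m-1)\lambda-u$ (case (a)) or $S=m\lambda-u$ (case (b)) into \eqref{eq_Scal2}; a one-line computation turns it into $\Delta_{\tilde f}u=-\phi(u)$, where $\phi(s)=2s(|\lambda|-s)$ with $a:=|\lambda|$ in case (a) and $\phi(s)=2s(\lambda-s)$ with $a:=\lambda$ in case (b). In either case $u\geq 0$ by the sign hypothesis on $S$, the quadratic $\phi$ satisfies \eqref{phi} with this $a$ and \eqref{phi1} with $\sigma=2$ (since $-\phi(s)/s^2\to 2$), one may take $b\equiv 1$ (so $\mu=0$), and $\phi(s)\geq as$ for $0<s<a/2$, so the exponent $\xi$ of Theorem \ref{thJFA_f} can be chosen equal to $1$.

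The next, and really the only delicate, step is to verify the volume requirements of Theorem \ref{thJFA_f}. Since the manifold is $1$-quasi-Einstein, $Ric_f^1=\lambda g_M$ is bounded below, so the $f$-Laplacian comparison theorem (\cite{qian}, \cite{mrs}) forces at most exponential growth of the $f$-weighted volume, $\log vol_f(B_r)=O(r)$. Because $f\geq f_*>-\infty$ one has $e^{-\tilde f}=e^{-2f}e^{-f}\leq e^{-2f_*}e^{-f}$, hence $vol_{\tilde f}(B_r)\leq e^{-2f_*}vol_f(B_r)$ and $\log vol_{\tilde f}(B_r)=O(r)$ as well; consequently \eqref{vol_fgrowth} (with $\mu=0$) holds automatically, and for any $\theta\geq 0$ and $\varepsilon>0$ with $2-\theta\varepsilon>0$ (shrink $\varepsilon$ otherwise) also $\liminf_{r\to+\infty}r^{-(2-\theta\varepsilon)}\log vol_{\tilde f}(B_r)<+\infty$. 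The remaining hypotheses of Theorem \ref{thJFA_f}, needed only when $(vol_{\tilde f}(\partial B_r))^{-1}\in L^1(+\infty)$ --- namely $u(x)\geq Dr(x)^{-\theta}$ for $r\gg 1$ together with the last growth bound --- are exactly the standing assumptions of the statement. (When $\lambda>0$ this entire paragraph is vacuous, since $M$ is then compact by \cite{qian}.) Theorem \ref{thJFA_f} now yields that $u$ is constant, identically $0$ or identically $a$.

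It then remains to read off the conclusions. In case (a), $u\equiv 0$ gives $S\equiv(m-1)\lambda$ and $u\equiv a=|\lambda|$ gives $S\equiv(m-1)\lambda+\lambda=m\lambda$, as claimed. In case (b), $u\equiv a=\lambda$ would force $S\equiv(m-1)\lambda$, which is impossible because $S\geq S_*>(m-1)\lambda$ by \eqref{estscal1} with $k=1$; hence $u\equiv 0$ and $S\equiv m\lambda$. For the Einstein conclusion, note that $\lambda>0$ and $k=1<\infty$ make $M$ compact (\cite{qian}); tracing \eqref{QE} with $k=1$ gives $S+\Delta f-|\nabla f|^2=m\lambda$, so $S\equiv m\lambda$ forces $\Delta f=|\nabla f|^2$, and integrating over the compact $M$ gives $\int_M|\nabla f|^2=0$, i.e. $f$ constant; then $\hess(f)\equiv 0$, $df\otimes df\equiv 0$, and \eqref{QE} reduces to $Ric=\lambda g_M$.

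The crux is the middle step: one has to recognize that the $k$-Bakry-Emery bound intrinsic to any quasi-Einstein metric, combined with $\inf_M f>-\infty$, already guarantees at most exponential $\tilde f$-weighted volume growth, so that the volume hypotheses of Theorem \ref{thJFA_f} collapse to the (conditional) decay/growth assumptions in the statement. Once that is in place, matching the exponents $\xi$, $\theta$, $\mu$ is routine bookkeeping, and the passage back to $S$ --- and to the Einstein property via the trace of \eqref{QE} on the compact base --- is elementary.
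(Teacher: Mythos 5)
Your proposal is correct and follows essentially the same route as the paper: rewrite \eqref{eq_Scal2} as $\Delta_{\tilde f}u=-\phi(u)$ with the quadratic $\phi$, verify the hypotheses of Theorem \ref{thJFA_f} (the volume growth coming from Qian's weighted volume comparison together with $f_*>-\infty$), and then trace \eqref{QE} on the compact base to get the Einstein conclusion. The only (harmless) deviation is in case (b), where you exclude $S\equiv(m-1)\lambda$ via the strict lower bound \eqref{estscal1} for $S_*$, whereas the paper derives a contradiction directly from $\Delta f=|\nabla f|^2+\lambda>0$ on the compact manifold; both arguments work.
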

\begin{proof}
(a) Assume $\lambda<0$. Considering $u=-S+(m-1)\lambda$, which is non-negative for $S\leq(m-1)\lambda$, from \eqref{eq_Scal2} we obtain that
\begin{equation}\label{eq_u1}
\Delta_{\tilde{f}}u=2u(u+\lambda).
\end{equation}
We want now to apply Theorem \ref{thJFA_f} to the equation \eqref{eq_u1} on the weighted manifold $(M,g_M, e^{-\tilde{f}}d\rm{vol})$. If we choose $\phi(t)=-2t(t+\lambda)$, it clearly satisfies assumptions \eqref{phi} and \eqref{phi1} with $a=-\lambda$ and the equation \eqref{eq_u1} can be written in the form
\begin{equation*}
\Delta_{\tilde{f}}u=-\phi(u),
\end{equation*}
as in the statement of Theorem \ref{thJFA_f}.

Furthermore, according to Qian weighted volume estimates, (\cite{qian}), since by assumption $f_*>-\infty$, we have the validity of the condition on the $\tilde{f}$- volume growth of the form \eqref{vol_fgrowth}.
Hence by Theorem \ref{thJFA_f} we are able to conclude that $S$ is constant and identically equal to either $m\lambda$ or $(m-1)\lambda$.

(b) Assume $\lambda>0$. Consider $u=-S+ m\lambda$ which is non-negative for $S\leq m\lambda$, and choose $\phi(t)=-2t(t-\lambda)$; applying Theorem \ref{thJFA_f} with $a=\lambda$, we conclude that $S$ is constant and identically equal to either $(m-1)\lambda$ or $m\lambda$.

Now we show that the first case cannot happen. Indeed, suppose that $S\equiv (m-1)\lambda$. Substituing in the trace of the quasi-Einstein equation we get that $\Delta f\geq 0$ and since $M$ is compact we obtain that $f$ is constant and $M$ is Einstein with $Ric=\lambda g_M$. But this is clearly impossible, since tracing this latter equation we get a contradiction. Hence $S\equiv m\lambda$. Substituing again in the trace of the quasi-Einstein equation we obtain, reasoning as above, that $f$ is constant, and thus that $M$ is Einstein. 
\end{proof}
\begin{acknowledgements*}
We wish to thank Stefano Pigola and Jeffrey Case for valuable suggestions and useful comments on earlier versions of the paper.
\end{acknowledgements*}

\bibliographystyle{amsplain}
\bibliography{ByblioQE}

\end{document}